\documentclass[11pt]{article}
\usepackage[T1]{fontenc}
\usepackage{lmodern,microtype}
\usepackage[margin=1in]{geometry}
\usepackage{amsmath,amssymb,amsthm}
\usepackage{graphicx}
\usepackage[hidelinks]{hyperref}
\hypersetup{pdftitle={Small additive groups with A + xi A = R},
            pdfauthor={Liang Yu}}

\newtheorem{theorem}{Theorem}[section]
\newtheorem{lemma}[theorem]{Lemma}
\newtheorem{corollary}[theorem]{Corollary}
\theoremstyle{remark}
\newtheorem{remark}[theorem]{Remark}

\newcommand{\R}{\mathbb R}
\newcommand{\Q}{\mathbb Q}
\newcommand{\Z}{\mathbb Z}
\newcommand{\N}{\mathbb N}
\newcommand{\dimH}{\dim_{\mathrm H}}
\newcommand{\HH}{\mathcal H}

\newcommand{\res}{\upharpoonright}

\title{Small additive groups with $A+\xi A=\mathbb R$}
\author{Liang Yu\thanks{Yu was partially supported by NSF of China
No. 12025103 and the Natural Science Foundation of Jiangsu Province
No. BK20243060.}\\[1ex]
\small School of Mathematics\\
\small Nanjing University, Jiangsu Province 210093\\
\small P. R. of China\\
\small\href{mailto:yuliang.nju@gmail.com}{\texttt{yuliang.nju@gmail.com}}}

\begin{document}
\maketitle

\begin{abstract}
We prove that, for every irrational $\xi$, any $F_\sigma$ additive
subgroup of $\R$ has an $F_\sigma$ $\Q$-vector space extension
of the same Hausdorff dimension satisfying $A+\xi A=\R$.
Iteration gives simultaneous surjectivity for any prescribed
countable family of irrational multipliers. Such a space exists
in every prescribed dimension $d\in[0,1)$. We also construct
in ZFC a zero-dimensional lightface $\Delta^0_3$ $\Q$-vector
subspace $A$ for which
$A+\xi A=\R$ holds for every irrational $\xi$, answering both
parts of Question~2 of Ye, Yu, and Zhao. This universal group is
a difference of two $F_\sigma$ sets, but is neither $F_\sigma$
nor $G_\delta$. Both constructions use direct inductions on
finite binary strings and admit effective versions. As applications, we obtain
zero-dimensional $F_\sigma$ groups whose Cartesian squares have
Hausdorff dimension one, and relate their dilate sums to
Marstrand's projection theorem.
\end{abstract}

\noindent\textit{2020 Mathematics Subject Classification.}
Primary 28A78; Secondary 03E15, 28A80.

\section{Introduction}

The relation between algebraic structure and Hausdorff dimension
has been studied since the work of Erd\H{o}s and Volkmann
\cite{EV}, who constructed Borel additive subgroups of $\R$ of
every Hausdorff dimension in $[0,1]$. The situation changes
substantially when closure under multiplication is also required.
Edgar and Miller \cite{EM}, and independently Bourgain
\cite{BourgainRing}, proved that an analytic subring of $\R$
either has Hausdorff dimension zero or equals $\R$. Thus additive
subgroups admit a range of dimensions that is impossible for
analytic subrings. In this paper we study a different way in which
addition and multiplication interact: how small can an additive
subgroup be if the sum of the group and one of its dilates is the
whole real line?

For a subgroup $A$ of $(\R,+)$ and $\xi\in\R$, put
\[
 A+\xi A=\{a+\xi b:a,b\in A\},\qquad
 X_A=\{\xi\in\R:A+\xi A=\R\}.
\]
These sums are also projections: if
$P_\xi(u,v)=u+\xi v$, then $P_\xi(A\times A)=A+\xi A$.
This connects the question with Bourgain's discretized
sum-product and projection theorems \cite{BourgainProjection}.
For example, a consequence of his projection theorem is that,
for a Borel set $E\subseteq\R$ with $0<\dimH E<1$,
\[
 \dimH\{\xi\in\R:\dimH(E+\xi E)\leq\dimH E\}=0.
\]
Indeed, apply \cite[Theorem~4]{BourgainProjection} to
$E\times E$, whose dimension lies between $2\dimH E$ and
$\dimH E+1$. In particular, dimension growth occurs outside a
zero-dimensional set of multipliers. Our question concerns
\emph{exact surjectivity}, rather than dimension growth: even
the equality $\dimH(A+\xi A)=1$ does not by itself imply
$A+\xi A=\R$.

Ye, Yu, and Zhao \cite{YYZ} investigated this surjectivity
problem for additive groups, rings, and fields. Their
Theorem~1.4 gives an $F_\sigma$ additive group of Hausdorff
dimension at most $\frac{1}{2}$ with $X_A\neq\varnothing$.
Their Theorem~1.5 gives, under the continuum hypothesis, a group
of Hausdorff dimension zero satisfying $X_A=\R\setminus\Q$.
In Question~2 they ask whether the continuum hypothesis can be
removed and whether such a group can be Borel. We strengthen
their first construction and answer both questions.

Our first result allows both the dimension and the irrational
multiplier to be prescribed.

\begin{theorem}\label{thm:main}
For every $\xi\in\R\setminus\Q$ and every $d\in[0,1)$, there is an
$F_\sigma$ $\Q$-vector subspace $A\subseteq\R$ such that
\[
 \dimH A=d\qquad\text{and}\qquad A+\xi A=\R.
\]
\end{theorem}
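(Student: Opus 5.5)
Theorem~\ref{thm:main} will follow from the extension result stated in the abstract: for every irrational $\xi$, every $F_\sigma$ additive subgroup $B\subseteq\R$ has an $F_\sigma$ $\Q$-vector space extension $A\supseteq B$ with $\dimH A=\dimH B$ and $A+\xi A=\R$. First fix $d\in[0,1)$ and take an $F_\sigma$ $\Q$-vector subspace $B$ with $\dimH B=d$. For $d>0$, use the Erd\H{o}s--Volkmann construction, closed under $\Q$-linear span. Concretely, let $B=\bigcup_n \tfrac1{n!}\,\mathrm{span}_{\Z,\le n}(K_d)$, where $K_d$ is a compact set of numbers whose binary expansions vanish outside a sparse set of blocks of density $d$. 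The span of a compact set with bounded coefficients is compact, and the standard block-counting estimate gives dimension $d$. For $d=0$, take $B=\Q$. Then $\dimH A=d$ is automatic, because $A\supseteq B$ gives $\ge d$ and the extension preserves dimension. So the whole weight is on the extension step.

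For the extension, I would build a compact set $C\subseteq\R$ with $\dimH(\mathrm{span}_\Q(B\cup C))=\dimH B$ and $C+\xi C\supseteq[0,1]$. Translating by $\Z\subseteq B$ then covers $\R$. The set $C$ is built by an induction on finite binary strings $\sigma$. Each string $\sigma$ carries a pair of dyadic intervals $(I_\sigma,J_\sigma)$ such that $I_\sigma+\xi J_\sigma$ covers the dyadic interval of reals whose expansion extends $\sigma$. For each $t\in[0,1]$, the nested choices along its expansion give $u=\bigcap I_{t\res n}$ and $v=\bigcap J_{t\res n}$ with $u+\xi v=t$. Set $C=\bigcup_\sigma$-limits, that is, $C=\bigcap_n\bigcup_{|\sigma|=n}(I_\sigma\cup J_\sigma)$, which is compact. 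The mechanism is the usual one. At level $n$ we choose a very fast growing length $\ell_n$ and force $u$ and $v$ to have all binary digits zero except in a window of positions. The digits in that window are prescribed so that $u+\xi v$ matches $t$ up to precision $2^{-\ell_{n+1}}$, with the free digits alternating between $u$ and $v$. Irrationality of $\xi$ is what allows this: $\{\xi k\}$ is dense modulo $1$, so at each stage we can choose a short integer $k$ with $\xi k 2^{-m}$ correcting the current error to a smaller scale. The key design choice is that almost all positions of $u$ and $v$ are forced zero. As a result, finite $\Q$-combinations of elements of $C$, together with elements of $B$, still lie in sets coverable with exponent $\dimH B+\varepsilon$.

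The main obstacle is the dimension estimate for the $\Q$-span: $\dimH A\le d$ where $A=\mathrm{span}_\Q(B\cup C)$. Write $A=\bigcup_{N}A_N$ with $A_N=\{q_0b+\sum_{i\le N}q_ic_i: b\in B_N,\ c_i\in C,\ q_i\in\tfrac1{N!}\{-N!N,\dots,N!N\}\}$, a countable union of compact sets, so $A$ is $F_\sigma$. For each $N$ I need $\dimH A_N\le d$. The covering argument works at the scale $2^{-\ell_{n}}$ for $n\gg N$. At that scale, $C$ up to that level is determined by at most $2^{O(\ell_{n-1})}$ choices, since the free window at stage $n$ is short compared with $\ell_n$. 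So $N$-fold combinations of $C$ add only a factor of $2^{O(N\ell_{n-1})}$ intervals to a cover of $B_N$. Choosing $\ell_n$ superexponentially, so that $\ell_{n-1}=o(\ell_n)$, makes this factor negligible, and the exponent stays at $d+\varepsilon$.

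The delicate point is that the free windows must still suffice to hit every $t$ exactly. This requires a quantitative form of density of $\{k\xi\}$, with window length depending on $\xi$'s continued fraction, independent of the growth rate. I expect to handle it by letting the window length at stage $n$ be whatever Dirichlet/density for $\xi$ demands at the previous precision, and only afterwards choosing $\ell_{n+1}$ huge.
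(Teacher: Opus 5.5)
\textbf{The extension step has a genuine gap.} Your reduction matches the paper's: an Erd\H{o}s--Volkmann group, then a dimension-preserving extension with $A+\xi A=\R$, with the $\Q$-span taken before or after. The mechanism you propose for the extension cannot work, and it fails exactly at the step you flag as the main obstacle.

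Your dimension estimate says that at scale $2^{-\ell_n}$ the set $C$ is covered by $2^{O(\ell_{n-1})}=2^{o(\ell_n)}$ intervals of length about $2^{-\ell_n}$. Then $C\times C$ is covered by $2^{o(\ell_n)}$ squares of side $2^{-\ell_n}$. Its Lipschitz image $C+\xi C=P_\xi(C\times C)$ is therefore covered by $2^{o(\ell_n)}$ intervals of length $(1+|\xi|)2^{-\ell_n}$. So $C+\xi C$ is Lebesgue null and cannot contain $[0,1]$.

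The same obstruction appears inside your construction. All digits of $u$ and $v$ between the $n$th window and position $\ell_{n+1}$ vanish. Hence the truncations after stage $n$ must approximate every $t\in[0,1]$ within $(1+|\xi|)2^{-\ell_{n+1}}$, as you indeed require. But there are only $2^{O(\ell_n)}=2^{o(\ell_{n+1})}$ truncated pairs, against roughly $2^{\ell_{n+1}}$ needed. This is a counting obstruction, so no quantitative density of $k\xi$ modulo $1$ can repair it.

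More generally, $C+\xi C\supseteq[0,1]$ forces $1\le\dimH(C\times C)\le 2\,\underline{\dim}_{\mathrm B}C$. So $N_C(\delta)\ge\delta^{-1/2+o(1)}$ at every small scale. Since your sets $A_N$ contain $C$, any argument covering them by intervals of one common length at a time can never give a bound below $\frac12$. A telling symptom is that your covering argument for $\Q$-combinations never uses rationality of the coefficients, so it would equally bound $\dimH(C+\xi C)$. In your plan, irrationality is used only on the surjectivity side.

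\textbf{What is missing} is a mechanism that makes smallness depend on the linear form and on the scale. In the paper the generators are indexed by the target: $h_\beta=\beta-\xi g_\beta$, so $h_\beta+\xi g_\beta=\beta$ holds exactly. For every nonzero integer pair, $ag_\beta+bh_\beta=(a-\xi b)g_\beta+b\beta$ with $a-\xi b\neq0$; this is the essential use of irrationality.

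For each coefficient list in turn, $g_\beta$ is confined near the cancellation value $t+b(v-\beta)/(a-\xi b)$ on fine cylinders $[v]$. This collapses that particular form to a finite set $S$. Then $B_j+S$ is covered with small $(d+\frac1j)$-content, and the assigned intervals are shrunk so that every later choice stays inside this cover.

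Different coefficient lists are collapsed at different scales. The resulting covers of the finite sums are therefore multi-scale, and the generator set is not sparse at any single small scale, as the box-dimension bound above requires. Without a cancellation device of this kind, your extension step has to be replaced, not merely quantified.
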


In fact, we prove the following dimension-preserving extension
result: for every $F_\sigma$ subgroup $B\subseteq\R$ and every
irrational $\xi$, there is an $F_\sigma$ subgroup $A$ such that
\[
 B\subseteq A,\qquad \dimH A=\dimH B,\qquad A+\xi A=\R.
\]
This is Theorem~\ref{thm:extension}. Together with the
Erd\H{o}s--Volkmann examples and passage to the divisible hull,
it yields Theorem~\ref{thm:main}. In particular, dimension zero
is already possible for an $F_\sigma$ $\Q$-vector space when
the multiplier is fixed. Iterating the extension theorem gives
simultaneous surjectivity for any prescribed countable set of
irrational multipliers, while preserving the dimension and the
$F_\sigma$ property; see Corollary~\ref{cor:countable-multipliers}.

Our second result uses a single group for every irrational
multiplier.

\begin{theorem}\label{thm:borel-universal}
In ZFC, there is a Borel $\Q$-vector subspace $A\subseteq\R$ such that
\[
 \dimH A=0\qquad\text{and}\qquad X_A=\R\setminus\Q.
\]
Moreover, $A$ can be chosen lightface $\Delta^0_3$, but it is
neither $F_\sigma$ nor $G_\delta$.
\end{theorem}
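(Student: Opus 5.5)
We need the construction to lie in the zero-dimensional setting, and for each irrational $\xi$ and each real $y$ we must produce $a,b\in A$ with $a+\xi b=y$. The plan is to build $A$ as the $\Q$-span of a set $A_0$ assembled in countably many stages, each stage a compact "tree-like" set coded by finite binary strings. Stage $n$ handles multipliers $\xi$ that are badly rational at scale $n$.

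For $n\ge1$ let $K_n$ be the set of irrationals $\xi\in[-n,n]$ with $|\xi-p/q|\ge 1/n$ for all $q\le n$. Then $K_n$ is closed, $\bigcup_n K_n=[-\infty,\infty]\setminus\Q$ after rescaling, and the $K_n$ are not compact subsets of the irrationals. This is the source of the non-$F_\sigma$-ness, and it is why the universal group has complexity $\Delta^0_3$ rather than $\Sigma^0_2$.

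At stage $n$ we build, by induction on strings $\sigma\in 2^{<\omega}$ with very rapidly decreasing scales $\varepsilon_k$, closed sets $C_n,D_n$ of dimension $0$. The defining property is that for every $\xi\in K_n$ and every $y\in[-n,n]$ there are $c\in C_n$ and $d\in D_n$ with $c+\xi d=y$. The mechanism is a nested-interval argument. At level $k$, for each dyadic interval $I$ of $\xi$'s of length $\varepsilon_k$ meeting $K_n$ and each target interval for $y$, we choose finitely many pairs of short intervals $(J,J')$ with $J+\xi J'$ covering the current target for all $\xi\in I$. The number of pieces at level $k$ is $N_k$, and we take $\varepsilon_{k+1}\ll N_k^{-k}$, which forces $\dimH=0$.

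The genuine difficulty is that a single closed set cannot work for all $\xi$ with a fixed structure, because $\xi$ near a rational $p/q$ degenerates. Separating by $K_n$ avoids this, since on $K_n$ the continued-fraction denominators at a fixed scale are controlled. The covering then uses the irrationality only through the fact that $\{a+\xi b\}$ for integers $a,b$ with $|b|\le M$ is $\delta$-dense, with $\delta\to0$ uniformly on $K_n$ as $M\to\infty$. This is Dirichlet/three-gap, uniform on the compact set $K_n$ after the first finitely many $q$. We set $A_0=\bigcup_n (C_n\cup D_n)$, made effective by choosing all parameters computably. Then $A$ is the $\Q$-span, and $\dimH A=0$ because it is a countable union of rational combinations of finitely many zero-dimensional compact sets. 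For the combinations, we arrange the $C_n,D_n$ so that finite sums stay zero-dimensional, for example by making the scale sequences interleave so that sums of $m$ such sets are still covered by $N_k^m$ intervals of size $\varepsilon_{k+1}$. The universality $X_A=\R\setminus\Q$ then follows, and $X_A\cap\Q=\varnothing$ since $A+qA=A\neq\R$.

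For the complexity, if $A$ were $F_\sigma$ then Theorem~\ref{thm:extension}-type reasoning gives nothing directly. Instead we argue that an $F_\sigma$ (or $G_\delta$) group of dimension zero has $X_A$ either empty-of-a-comeager-piece or $\Sigma^1_1$ with a Baire-category obstruction. For $G_\delta$: a non-meager $G_\delta$ group is $\R$ by Pettis, and a meager one is contained in a meager $F_\sigma$. For $F_\sigma$: $A=\bigcup F_i$ with $F_i$ compact of dimension $0$. Then $A\times A$ is $\sigma$-compact and $\{\xi: A+\xi A=\R\}$ is $\bigcap_y\bigcup_{i,j}\{\xi: y\in F_i+\xi F_j\}$, a $G_\delta$ condition. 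A Baire argument, applied to $y$ and $\xi$ together with rationals being dense, should yield a contradiction with $X_A=\R\setminus\Q$, though exactly how is not yet clear. I expect this non-$F_\sigma$ step, together with ensuring that zero dimension survives the $\Q$-span, to be the main obstacle. For membership in $\Delta^0_3$, $A$ is a countable union over rational coefficient tuples of continuous images of compact sets restricted to the code, and these are effectively $\Sigma^0_2$. The correction making it exactly the intended group intersects with a $\Pi^0_2$ condition, so $A$ is a difference of $F_\sigma$ sets.
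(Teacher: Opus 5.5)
Your construction has a structural gap: it produces an $F_\sigma$ group, and no such group can work. The sets $C_n,D_n$ come from a nested-interval scheme, so they are compact. Then $A=\operatorname{span}_{\Q}\bigcup_n(C_n\cup D_n)$ is a countable union of continuous images of finite products of compact sets, so $A$ is $F_\sigma$. That contradicts the non-$F_\sigma$ claim you are trying to prove. The non-compactness of the multiplier sets $K_n$ does not affect the complexity of $A$, which depends only on the generators.

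The failure is concrete. For compact $C_n,D_n$ the set $\{\xi:[-n,n]\subseteq C_n+\xi D_n\}$ is closed: if $\xi_k\to\xi$ and $y=c_k+\xi_kd_k$, pass to convergent subsequences of $c_k,d_k$. So your defining property holds on all of $\overline{K_n}$. A closed set of irrationals is nowhere dense. Hence if the $K_n$ exhaust the irrationals of an interval, Baire's theorem gives some $\overline{K_n}$ with interior, hence containing a rational $q$. Then $[-n,n]\subseteq C_n+qD_n\subseteq A$, so $A=\R$. Your stated properties of $K_n$ are already inconsistent: a closed bounded set of irrationals is compact, and with $q=n$ the points $p/n$ are $1/(2n)$-dense, so your literal $K_n$ is empty.

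This is an instance of the obstruction the paper cites from Proposition~4.6 of Ye, Yu, and Zhao to show $A$ is not $F_\sigma$. The paper avoids it by indexing generators by the non-$\sigma$-compact Polish space $((1,2)\setminus\Q)\times2^\omega$. The generator set is Borel only via $\Q$-independence and Lusin--Souslin. The $\Delta^0_3$ bound comes from extending the construction to the compact space $2^\omega\times2^\omega$ and writing $A=(F\cup\{0\})\setminus F_{\mathrm{rat}}$, where $F_{\mathrm{rat}}$ collects the sums that use a rational-multiplier label.

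Independently, you have no working mechanism for $\dimH A=0$. A finite rational combination of zero-dimensional compact sets need not be zero-dimensional; your own $C_n+\xi D_n\supseteq[-n,n]$ is a counterexample. The proposed remedy defeats itself. If, as your count suggests, $C_n$ and $D_n$ are both covered by $N_k$ intervals of length $\varepsilon_{k+1}\ll N_k^{-k}$, then $C_n+\xi D_n$ is covered by $N_k^2$ intervals of length $(1+|\xi|)\varepsilon_{k+1}$. Their total length tends to $0$, so $C_n+\xi D_n$ cannot contain an interval. More generally, any bound on $\sum q_ix_i$ obtained from covering numbers of the individual sets applies verbatim to $C_n+\xi D_n$. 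You need a device that distinguishes rational from irrational coefficients.

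In the paper this is the cancellation built into $h_{\xi,\beta}=\beta-\xi g_{\xi,\beta}$. A rational pair gives $ag+bh=(a-b\xi)g+b\beta$. On parameter cells where $|a-b\xi|\geq\kappa>0$, $g$ is confined to a tiny interval near $(v-b\alpha)/(a-b\xi)$, which makes the combination nearly the constant $v$ on each cell. One small translation $\delta\in\bigcap_{s\in S}(G_j-s)$ then moves the finitely many constants into the dense open set $G_j$. So every nontrivial rational combination lies in the fixed zero-dimensional set $G\not\ni0$, which gives $\dimH A=0$ and independence at once. Meanwhile the irrational pair $(a,b)=(\xi,1)$, where $a-b\xi=0$, gives $h_{\xi,\beta}+\xi g_{\xi,\beta}=\beta$ and hence surjectivity.

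Your $G_\delta$ argument also needs density. A nonzero $\Q$-vector space is dense, so a $G_\delta$ one is comeager, and then $A\cap(t+A)\neq\varnothing$ forces $t\in A$ for every $t$.
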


Theorem~\ref{thm:borel-universal} answers both parts of Question~2
of \cite{YYZ} in ZFC. More precisely, the construction gives $A$
as a difference of two $F_\sigma$ sets. This description and the
effective complexity assertion are proved in
Remarks~\ref{rem:borel-complexity} and
\ref{rem:effective-universal}. It cannot be $F_\sigma$:
\cite[Proposition~4.6]{YYZ}, applied to null $F_\sigma$
subgroups, implies that their sets $X_A$ are meager. It cannot
be $G_\delta$ either, since a dense $G_\delta$ subgroup of
$\R$ must equal $\R$.

The irrationality restriction is necessary for every proper
subgroup. Indeed, if $\xi=p/q\in\Q$, where $p,q\in\Z$ and
$q\neq0$, then $A+\xi A=\R$ would imply
\[
 \R=q\R=qA+pA\subseteq A.
\]
Thus Theorem~\ref{thm:borel-universal} realizes the largest
possible set of surjective multipliers for a proper subgroup.

The examples also clarify the role of Cartesian products in
these questions. The zero-dimensional groups constructed here
satisfy
\[
 \dimH(A\times A)=1>2\dimH A.
\]
This contrasts with the stronger conclusion of Edgar and Miller
\cite[Theorem~1]{EM}: every finite Cartesian power of a proper
analytic subring of $\R$ has Hausdorff dimension zero.
Multiplicative closure is therefore essential to that conclusion.
For our groups, Marstrand's projection theorem \cite{Marstrand}
gives $\dimH(A+\xi A)=1$ for almost every $\xi$, but does not
give surjectivity at this critical product dimension. Our
constructions impose the stronger equality with $\R$ at a
prescribed irrational multiplier, or simultaneously at all
irrational multipliers. These are existence and extension
results, not assertions that every small Borel subgroup has
a surjective dilate sum.

Both proofs use direct inductions on finite binary strings.
For a fixed multiplier, we choose generators $g_\beta$ and
$h_\beta$ satisfying
\[
 h_\beta=\beta-\xi g_\beta,\qquad
 h_\beta+\xi g_\beta=\beta\quad(\beta\in2^\omega).
\]
Surjectivity is built into this identity. Small adjustments of
the assigned generator intervals cancel the variation in
individual linear expressions, allowing all finite sums to be
controlled without increasing the dimension. For the universal
construction, the multiplier is an additional parameter, and
the induction controls finite rational combinations
simultaneously. Both constructions admit effective versions.

Section~\ref{sec:extension} proves the dimension-preserving
extension theorem and discusses its effectivity.
Section~\ref{sec:borel-universal} constructs the universal Borel
group and determines its Borel complexity.
Section~\ref{sec:applications} gives applications to Cartesian
products and relates the examples to Marstrand's projection theorem.

\medskip
\noindent\textit{Notation.}
Following the binary notation of \cite[Section~4]{YYZ}, we regard
elements of $2^\omega$ as reals and finite binary strings as dyadic
rationals. More precisely, in an arithmetic expression we write
\[
 \beta=\sum_{i<\omega}\beta(i)2^{-i-1},\qquad
 \sigma=\sum_{i<|\sigma|}\sigma(i)2^{-i-1}
 \quad(\beta\in2^\omega,\ \sigma\in2^{<\omega}).
\]
Write $\sigma\preceq \beta$ when $\sigma$ is an initial segment of $\beta$,
and let $\beta\res n$ be its initial segment of length $n$. Put
\[
 [\sigma]=\{\beta\in2^\omega:\sigma\preceq \beta\},\qquad
 D_\sigma=[\sigma,\sigma+2^{-|\sigma|}].
\]
Thus $[\sigma]$ is a clopen set of branches, whereas $D_\sigma$ is
the closed interval of their real values. In particular,
\begin{equation}\label{eq:tail}
 \sigma\preceq \beta\quad\Longrightarrow\quad
 0\leq \beta-\sigma\leq2^{-|\sigma|}.
\end{equation}
We keep both expansions of a dyadic real as distinct branch labels.
Compactness and continuity of branch-indexed families refer to the
Cantor topology, not to an asserted identification of topological
spaces. All arithmetic and Hausdorff dimensions are in Euclidean
space. No separate Cantor subset of $\R$ or coding map is needed.

In both proofs we assign a finite binary string to each of finitely
many parameter prefixes. Refining the parameter prefixes allows us
to extend the assigned strings. Extending an assigned string shrinks
its interval $D_\sigma$, so every earlier condition remains true.
The required generators are the unions of the assigned strings
along infinite branches.

We use $\alpha$ (or $\alpha_\sigma,\alpha_i$) for arbitrary test
branches in the finite-stage requirements, and $\beta$ (or
$\beta_\sigma,\beta_i$) for branch variables indexing the final
generators. Both range over $2^\omega$: the change of letter marks
a fresh quantifier, not a passage from a finite approximation to
an infinite branch. In particular, the induction chooses lengths
and assigned finite strings, not the test branches themselves.
The symbols $u$ denote allowed real values in assigned intervals;
$\widetilde u$ denotes a cancellation value. The symbols
$g_\beta,h_\beta$ and $g_{\xi,\beta},h_{\xi,\beta}$ always refer to final
limit generators, never to finite-stage choices.

\section{A dimension-preserving extension}
\label{sec:extension}

\begin{theorem}\label{thm:extension}
Let $B$ be an $F_\sigma$ subgroup of $(\R,+)$, and let
$\xi\in\R\setminus\Q$. There is an $F_\sigma$ subgroup $A$ such that
\[
 B\subseteq A,\qquad \dimH A=\dimH B,\qquad A+\xi A=\R.
\]
\end{theorem}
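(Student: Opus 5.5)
We build two continuous maps $\beta\mapsto g_\beta$ and $\beta\mapsto h_\beta$ from $2^\omega$ to $\R$ satisfying $h_\beta+\xi g_\beta=\beta$ for every $\beta$, and set
\[
 A=\Big\{\,b+\sum_{i<k} (m_i g_{\beta_i}+n_i h_{\gamma_i}) : b\in B,\ k\in\N,\ m_i,n_i\in\Z,\ \beta_i,\gamma_i\in2^\omega\Big\}.
\]
The set $A$ is the group generated by $B$ together with the compact sets $G=\{g_\beta\}$ and $H=\{h_\beta\}$. Surjectivity is then free: every real is $t+\beta$ with $t\in\Z$ and $\beta\in2^\omega$, and $t\in A$ once $1\in A$. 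We get $1\in A$ by adding $1$ to $B$ first; this does not change the dimension, since the group generated by $B\cup\{1\}$ is a countable union of translates of $B$. Then $t+\beta = t+h_\beta+\xi g_\beta\in A+\xi A$.

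$A$ is $F_\sigma$ because, for fixed $k$ and coefficient bounds, the set of sums is a continuous image of $B_j\times(2^\omega)^{2k}$, with $B=\bigcup_j B_j$ compact. This image is $\sigma$-compact. So the whole difficulty is the dimension: we need $\dimH A\le \dimH B$. By countable stability it suffices to show, for each fixed $k$, fixed integer coefficients $m_i,n_i$, and fixed compact piece $B_j$, that
\[
S=B_j+\Big\{\textstyle\sum_i (m_i g_{\beta_i}+n_i h_{\gamma_i})\Big\}
\]
has dimension at most $\dimH B$. We would arrange that the second summand $L$ has upper box dimension $0$, or at least that $\dimH(B_j+L)\le\dimH B_j+\varepsilon$ for every $\varepsilon$. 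A clean way is to make $L$ a compact set coverable, at every scale $2^{-n_s}$ along a sequence $n_s$, by very few intervals: $N_s$ intervals of length $\delta_s$ with $\log N_s/\log(1/\delta_s)\to0$. Then $B_j+L$ is covered by $N_s$ translates of $\delta_s$-neighbourhoods of $B_j$. A Hausdorff-measure estimate gives $\HH^{t}(B_j+L)$ controlled when $t>\dimH B_j$, provided the covers of $B_j$ are compatible with scale $\delta_s$. We will need care here: Hausdorff dimension of $B_j$ does not directly give covers at prescribed scales. So we would use instead that $\HH^t(B_j)=0$ for $t>\dimH B$ yields countable covers by small intervals. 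We then choose the generator construction after fixing these covers, inductively, so that at the relevant scales the whole family $L$ collapses to few points relative to each interval length in play.

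The construction is an induction on finite binary strings. At stage $s$ we assign to each $\sigma\in2^{n_s}$ dyadic strings $\tau_\sigma$ (for $g$) with $g_\beta\in D_{\tau_\sigma}$ whenever $\sigma\preceq\beta$. Then $h_\beta=\beta-\xi g_\beta$ ranges over an interval of length at most $2^{-n_s}+|\xi|2^{-|\tau_\sigma|}$. The key trick, as foreshadowed in the introduction, is the cancellation. If we choose $g$-values on $[\sigma]$ near a value $\widetilde u_\sigma$ with $\widetilde u_\sigma\approx (\sigma-c)/\xi$ for a common constant $c$, then $h$ becomes nearly constant ($\approx c$) across all $\sigma$. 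Of course $g$ then varies, so we alternate. At odd stages we make $h$ take essentially one value up to precision $\delta_s$ (forcing $g$ to spread), and at even stages we make $g$ essentially constant. Irrationality of $\xi$, via density of $\Z+\xi\Z$, is what lets us choose refinements so that each combination $\sum m_ig_{\beta_i}+n_ih_{\gamma_i}$ with bounded coefficients lies in few short intervals at the chosen scale. The point is that the values at a stage lie in a lattice-like set $\{p+q\xi\}$ with bounded $p,q$, which has few distinct elements. This only requires controlling $L$ at a sparse sequence of scales, which suffices for Hausdorff (lower) dimension.

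The main obstacle is exactly this simultaneous control. We must handle all finite linear combinations with all coefficient bounds, across all stages, while keeping the identity $h=\beta-\xi g$ exact and the maps continuous. We also need the sparse-scale covers of $L$ to combine with covers of $B_j$ to give $\HH^t(B_j+L)=0$. I would first try to diagonalize: at stage $s$, handle only combinations with $k,|m_i|,|n_i|\le s$. We choose $n_{s+1}$ so large that the number $N_s\le (\text{const}\cdot s)^{2s}\cdot 2^{O(n_s)}$ of possible stage-$s$ values is negligible against the new precision. We then refine the assigned strings so that every stage-$s$ combination is pinned within $\delta_s=2^{-n_{s+1}}$ of one of those $N_s$ values. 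Since $\log N_s/ n_{s+1}\to0$, the dimension bound follows by a Frostman/cover argument with $B_j$.
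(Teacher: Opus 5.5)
There is a genuine gap in the dimension argument. Your setup (continuous $g_\beta$, $h_\beta=\beta-\xi g_\beta$, the $F_\sigma$ property via compact images, surjectivity from $h_\beta+\xi g_\beta=\beta$) matches the paper. The problem is that your alternation only ever makes the directions $(1,0)$ and $(0,1)$ nearly constant on cells. A same-branch mixed combination such as $g_\beta+h_\beta=(1-\xi)g_\beta+\beta$ is never pinned. At an $h$-stage it varies by about $2^{-|v|}/|\xi|$ on each cell $[v]$, and at a $g$-stage by about $2^{-|w|}$ on each cell $[w]$. Both are far above the precision $\delta_s$. Yet $\{g_\beta+h_\beta\}\subseteq A$ must have dimension at most $\dimH B$, for example $0$.

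Worse, the last paragraph's claim cannot hold. You assert that all stage-$s$ combinations lie within $\delta_s$ of $N_s$ values with $\log N_s/n_{s+1}\to0$. Apply this to $G$ and $H$ alone. Then $G\times H$ is covered by $N_s^2$ squares of side $2\delta_s$. Their images under $(g,h)\mapsto h+\xi g$ cover $[0,1]$, so $2N_s^2(1+|\xi|)\delta_s\geq1$. Hence no single scale can serve all combinations, and the appeal to density of $\Z+\xi\Z$ or lattice-like values does not address this.

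What is missing is the paper's per-pair cancellation together with its bookkeeping.

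\emph{Per-pair cancellation.} For a nonzero pair $(a,b)$ on one cell, $ag_\beta+bh_\beta=(a-\xi b)g_\beta+b\beta$ with $a-\xi b\neq0$. This, not density of $\Z+\xi\Z$, is where irrationality enters. On refined cylinders $[v]$ one takes $g_\beta$ near $t+b(v-\beta)/(a-\xi b)$, which makes that contribution nearly constant.

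\emph{One list per substage.} Each coefficient list, with one pair per old cell, gets its own substage at its own scale. Coincident branches are handled by grouping labels into a single pair.

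\emph{Order of choices.} The finite set $S$ of constants is fixed first. Then a cover of $B_j$ with $(d+1/j)$-content below $2^{-j}/(q_j|S|)$ is chosen and translated by $S$. Only then are the generator intervals shrunk below that cover's safety margin.

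\emph{Persistence of covers.} Later steps only shrink intervals, so each open cover persists even though its combination is no longer pinned at finer scales. The Hausdorff estimate uses the union of these covers at mixed scales, not a box count at one scale. This ordering also removes the difficulty you flagged with $B_j$: no growth condition like $\log N_s/n_{s+1}\to0$ is needed, only that $|S|$ is finite before the cover of $B_j$ is chosen.
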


\medskip
\noindent\textit{Idea of the proof.}
We build surjectivity into the choice of generators. For each
$\beta\in2^\omega$, choose a real $g_\beta$ and put $h_\beta=\beta-\xi g_\beta$.
If $A$ is the group generated by $B$ and all these reals, then
$\beta=h_\beta+\xi g_\beta\in A+\xi A$. Hence $A+\xi A$ contains $[0,1]$
and therefore equals $\R$. The difficulty is to choose the
generators so that adjoining \emph{all their finite integer sums}
does not increase the Hausdorff dimension of $B$.

The key is a small adjustment that makes one linear expression
constant. For a nonzero integer pair $(a,b)$,
\[
 ag_\beta+bh_\beta=(a-\xi b)g_\beta+b\beta,
\]
and $a-\xi b\neq0$ because $\xi$ is irrational. At a finite
stage, let $t$ be the midpoint of a currently assigned interval
for generator values, and let $[v]$ be a sufficiently small
cylinder of parameter branches whose generators are restricted
to that interval. These are different objects: $[v]$ restricts
the parameter, whereas the assigned dyadic interval restricts
the generator value. For an arbitrary test branch $\alpha\in[v]$,
use the cancellation value
\[
 \widetilde u=t+\frac{b(v-\alpha)}{a-\xi b}.
\]
Since $|v-\alpha|\leq2^{-|v|}$, the adjustment of $t$ can be made
arbitrarily small by lengthening $v$. Yet it exactly cancels the
variation in $\alpha$:
\[
 (a-\xi b)\widetilde u+b\alpha=(a-\xi b)t+bv.
\]
The right-hand side is fixed on $[v]$. Doing this on finitely many
cylinders reduces the sums for one coefficient choice to a finite
set $S$.

Write $d=\dimH B$ and exhaust $B$ by increasing compact sets $B_j$.
The set $B_j+S$ is a finite union of translates of $B_j$, so it
admits an open cover with arbitrarily small
$\bigl(d+\frac{1}{j}\bigr)$-content. We then choose sufficiently
small dyadic intervals near the cancellation values so that
\emph{every} choice from these intervals keeps the sums inside
that cover. Later steps only shrink the assigned intervals.
Thus the exact cancellation need not persist, but the cover does.
This lets us treat the finitely many coefficient choices at each
stage one after another without losing the conclusions already
obtained.

Finally, longer parameter prefixes separate any fixed finite set
of distinct branches, while increasing coefficient bounds eventually
include every finite integer combination. The nested assigned
strings define continuous families of generators. Their finite
sums, together with the compact sets $B_j$, give an increasing
compact exhaustion of $A$. Each fixed compact family is covered
at all sufficiently late stages by covers whose diameters and
contents tend to zero. Consequently $A$ is $F_\sigma$ and
$\dimH A\leq d$; the inclusion $B\subseteq A$ gives equality.

\medskip
Figure~\ref{fig:parameter-relationships} gives a guide to the
notation used in the proof below. It separates prefixes of the
parameter branch from the assigned binary strings that restrict
the generator values.

\begin{figure}[p]
\centering
\includegraphics[width=\textwidth]{ 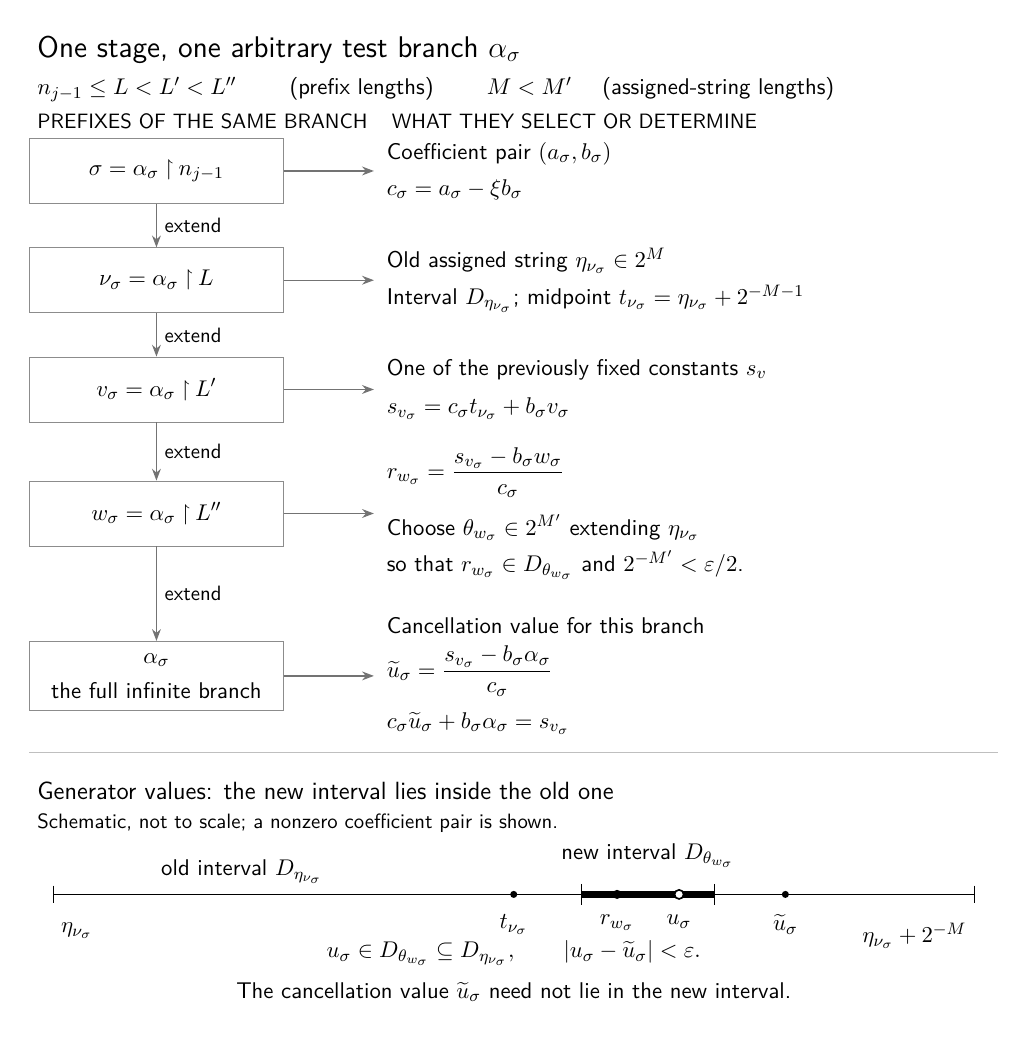}
\caption{Parameters for a fixed nonzero coefficient pair during
stage $j$. Read downward for longer prefixes of the same arbitrary test branch
$\alpha_\sigma$, and rightward for the objects they select. The assigned
string $\theta_{w_\sigma}$ extends $\eta_{\nu_\sigma}$, not
$w_\sigma$. The interval sketch distinguishes an arbitrary allowed
value $u_\sigma$ from the cancellation value
$\widetilde u_\sigma$, which need not lie in the new interval.}
\label{fig:parameter-relationships}
\end{figure}

\medskip
\begin{proof}
Put $d=\dimH B$, and choose increasing nonempty compact sets $B_j$
such that $0\in B_1$ and $B=\bigcup_{j\geq1}B_j$.
For each $\beta\in2^\omega$ we shall construct a real $g_\beta$ and put
\[
 h_\beta=\beta-\xi g_\beta.
\]
Thus $h_\beta+\xi g_\beta=\beta$ automatically. We have only to keep the group
generated by $B$ and these reals small.

At stage $j$ choose integers $n_j,m_j$, and a string
\[
 \tau^j_\sigma\in2^{m_j}\qquad(\sigma\in2^{n_j}).
\]
We require $n_j>n_{j-1}$, $m_j>m_{j-1}$, and
\begin{equation}\label{eq:string-nesting}
 \tau^{j-1}_{\sigma\res n_{j-1}}\preceq\tau^j_\sigma.
\end{equation}
Start with $n_0=0$, $m_0=2$, and $\tau^0_\varnothing=01$.
Thus all the intervals for $g_\beta$ lie in $D_{01}=[1/4,1/2]$.
At the end we set
\begin{equation}\label{eq:branch-generators}
 g_\beta=\bigcup_{\ell\geq0}\tau^\ell_{\beta\res n_\ell},
 \qquad h_\beta=\beta-\xi g_\beta.
\end{equation}
Here $\beta$ ranges over all of $2^\omega$, independently of any
finite stage. The union is an infinite binary string, also read
as a real; the index $\ell$ ranges over the entire construction.

\medskip
\noindent\textit{Stage $j$.}
Keep the old prefixes $\sigma\in2^{n_{j-1}}$ fixed throughout this
stage. Consider every choice of integer pairs
\[
 (a_\sigma,b_\sigma),\qquad |a_\sigma|,|b_\sigma|\leq j.
\]
There are exactly
\[
 q_j=(2j+1)^{2\cdot2^{n_{j-1}}}
\]
coefficient lists. Stage $j$ consists of exactly $q_j$ substages,
one for each list, including the all-zero list. Fix any ordering
of these lists and process them successively.
We shall obtain a finite family $\mathcal I_j$ of open intervals with
\begin{equation}\label{eq:stage-cover}
 |I|<2^{-j}\quad(I\in\mathcal I_j),\qquad
 \sum_{I\in\mathcal I_j}|I|^{d+\frac{1}{j}}<2^{-j},
\end{equation}
covering every number
\begin{equation}\label{eq:allowed-sums}
 \begin{gathered}
 z+\sum_{\sigma\in2^{n_{j-1}}}
       \bigl((a_\sigma-\xi b_\sigma)u_\sigma
                  +b_\sigma\alpha_\sigma\bigr),\\
 z\in B_j,\qquad \alpha_\sigma\in[\sigma],\qquad
 u_\sigma\in D_{\tau^j_{\alpha_\sigma\res n_j}}.
 \end{gathered}
\end{equation}
for all the coefficient choices above. Each $\alpha_\sigma$ is
an arbitrary infinite extension of $\sigma$, not a branch chosen
by the induction: the requirement is universal over all such
test branches and all allowed $u_\sigma$. Zero pairs leave prefixes
unused. Since the assertion holds for every point of each assigned
interval, it survives every later extension of the strings.

At each of the $q_j$ substages, we treat its coefficient list
and allocate content less than $2^{-j}/q_j$ to its cover.
The current data at a substage are the output of the preceding
substage (or of stage $j-1$ at the first substage). Write
$L$ for the current parameter-prefix length and denote the current
assigned strings by $\eta_\nu\in2^M$ for $\nu\in2^L$.
Initially,
\[
 L=n_{j-1},\qquad M=m_{j-1},\qquad
 \eta_\nu=\tau^{j-1}_\nu\quad(\nu\in2^{n_{j-1}}).
\]
After each substage, update $L,M$ and the assigned
strings to the refinements just obtained. The old length
$n_{j-1}$ and the coefficient indices $\sigma\in2^{n_{j-1}}$
remain fixed throughout the stage. Thus $\sigma$ determines the
coefficient pair, while a current prefix $\nu\in2^L$ determines
the current assigned interval; they satisfy
$\sigma=\nu\res n_{j-1}$. We reserve $n_j,m_j,\tau^j$ for the
final data at the end of the stage.

At the substage for the all-zero list, simply cover $B_j$ with
the stated bounds; no refinement is needed. Now consider a
substage with a nonzero coefficient list, and
put \[c_\sigma=a_\sigma-\xi b_\sigma.\] By the irrationality of $\xi$,
$c_\sigma\neq0$ whenever $(a_\sigma,b_\sigma)\neq(0,0)$.
For each current prefix $\nu\in2^L$, let
\[
 t_\nu=\eta_\nu+2^{-M-1},\qquad \rho=2^{-M-1};
 \quad D_{\eta_\nu}=[\eta_\nu, \eta_\nu+2^{-M}]=[t_\nu-\rho,t_\nu+\rho].
\]

We first refine the parameter prefixes so that the forthcoming
cancellation requires only a small adjustment of the old midpoint.
By using a longer prefix $v$ instead, we reduce the approximation
error to at most $2^{-L'}$ and can make the correction smaller
than $\rho/4$. At this point we refine only the parameter
prefixes, leaving the old assigned intervals $D_{\eta_\nu}$
unchanged.

Choose $L'>L$ and, for each $\nu\in2^L$, consider all its
length-$L'$ extensions $v$, thereby subdividing $[\nu]$ into
the finer cylinders $[v]$.
For every resulting prefix $v\in2^{L'}$, write
$\nu=v\res L$ and $\sigma=v\res n_{j-1}$.
Choose $L'$ large enough that
\[
 |b_\sigma|2^{-L'}<|c_\sigma|\rho/4
 \quad\text{whenever }(a_\sigma,b_\sigma)\neq(0,0).
\]
For every $v\in2^{L'}$, put
\[
 s_v=c_{v\res n_{j-1}}t_{v\res L}
       +b_{v\res n_{j-1}}v
     =c_\sigma t_\nu+b_\sigma v.
\]
Here $v$ in the arithmetic expression denotes its dyadic rational
value. This definition also applies when the coefficient pair is
zero, in which case $s_v=0$.
For a nonzero pair and each $\alpha\in[v]$, define the cancellation value
\begin{equation}\label{eq:cancellation-value}
 \widetilde u=\frac{s_v-b_\sigma \alpha}{c_\sigma}=t_{\nu}+\frac{b_{\sigma}}{c_{\sigma}}(v-\alpha).
\end{equation}
It is called the cancellation value because the term $-b_\sigma \alpha$
in $c_\sigma\widetilde u=s_v-b_\sigma \alpha$ cancels the added
$b_\sigma \alpha$, leaving the constant $s_v$.
Here $v$, and hence $\sigma$, is fixed, and $\widetilde u$ depends
on $v$ and the chosen $\alpha$; it is not one common value for all
prefixes. The tilde distinguishes this cancellation value
from the arbitrary allowed values $u_\sigma$ in
\eqref{eq:allowed-sums}. We have
\[
 |\widetilde u-t_\nu|
 =\frac{|b_\sigma|}{|c_\sigma|}|v-\alpha|
 \leq\frac{|b_\sigma|}{|c_\sigma|}2^{-L'}<\rho/4, \mbox{ and }\]
 
 \[
 \qquad c_\sigma\widetilde u+b_\sigma \alpha=s_v.
\]
Thus $\widetilde u$ stays close to the old midpoint while its contribution
$c_\sigma\widetilde u+b_\sigma \alpha$ has the same value $s_v$ for every
$\alpha\in[v]$. There are only finitely many prefixes $v$, so the
possible contributions have been reduced to finitely many numbers.
\emph{This is the key idea of the proof.}
We do not require the final generator on this branch to equal
this cancellation value exactly; below we choose its allowed interval sufficiently
close to this value to keep the resulting sums inside the chosen
open cover. For a zero pair the contribution is already the constant
$s_v=0$, and no division is made.

With the coefficient list, the current assignment, and $L'$ fixed,
define
\[
 S=\left\{
   \sum_{\sigma\in2^{n_{j-1}}}s_{v_\sigma}
   \;\middle|\;
   \begin{array}{l}
   v_\sigma\in2^{L'},\quad v_\sigma\res n_{j-1}=\sigma\\
   \text{for every }\sigma\in2^{n_{j-1}}
   \end{array}
   \right\}.
\]
The tuple $(v_\sigma)_{\sigma\in2^{n_{j-1}}}$ ranges over all
independent choices of one length-$L'$ extension of each old
prefix $\sigma$. In particular, these strings are not fixed
prefixes of previously chosen branches. For each such choice,
the summand is the already defined number
\[
 s_{v_\sigma}=c_\sigma t_{v_\sigma\res L}+b_\sigma v_\sigma.
\]
Thus forming a sum in $S$ means selecting one constant above each
old prefix and adding the selected constants. No infinite branch,
allowed value $u_\sigma$, or element of $B_j$ enters the definition
of $S$; the latter is added separately in $B_j+S$. The set $S$
is defined anew for each coefficient choice from its current data.

For each $\sigma\in2^{n_{j-1}}$, there are exactly
$2^{L'-n_{j-1}}$ strings of length $L'$ extending $\sigma$.
A sum in $S$ is obtained by choosing one such extension for each
of the $2^{n_{j-1}}$ old prefixes. Consequently,
\[
 1\leq |S|\leq
 \left(2^{L'-n_{j-1}}\right)^{2^{n_{j-1}}}
 =2^{(L'-n_{j-1})\,2^{n_{j-1}}}.
\]
The inequality need not be an equality, since different tuples
of extensions may give the same sum.
 
This estimate also explains how to choose the required cover of
$B_j+S$. Since $\dim_{\mathrm H}B_j\leq d$, we have
$\mathcal H^{d+\frac{1}{j}}(B_j)=0$. By compactness, choose a finite
family $\mathcal U$ of open intervals covering $B_j$ such that
\[
 |I|<2^{-j}\quad(I\in\mathcal U),\qquad
 \sum_{I\in\mathcal U}|I|^{d+\frac{1}{j}}
 <\frac{2^{-j}}{q_j|S|}.
\]
Translate this cover by each element of $S$, and put
\[
 \mathcal I=\{I+s:I\in\mathcal U,\ s\in S\}.
\]
Then $\mathcal I$ is a finite open interval cover of $B_j+S$.
Translation preserves interval lengths, so
\[
 |J|<2^{-j}\quad(J\in\mathcal I),\qquad
 \sum_{J\in\mathcal I}|J|^{d+\frac{1}{j}}
 \leq |S|\sum_{I\in\mathcal U}|I|^{d+\frac{1}{j}}
 <\frac{2^{-j}}{q_j}.
\]
Thus the number of translates may be large, but it is finite,
and we compensate by choosing the initial cover sufficiently small.

Write $O=\bigcup\mathcal I$ and
\[
 \delta=\operatorname{dist}(B_j+S,\R\setminus O)>0,\qquad
 C=\sum_\sigma|c_\sigma|>0.
\]
Choose
\[
 0<\varepsilon<\min\{\rho/4,\delta/(2C)\}.
\]

We now use the cancellation values to choose finite strings.
Choose $L''>L'$ large enough that
\[
 \frac{|b_\sigma|}{|c_\sigma|}2^{-L''}<\varepsilon/2
\]
for every nonzero pair. For each $w\in2^{L''}$, let
$v=w\res L'$, $\nu=w\res L$, and $\sigma=w\res n_{j-1}$.
If its coefficient pair is nonzero, put
\[
 r_w=\frac{s_v-b_\sigma w}{c_\sigma}=t_{\nu}+\frac{b_{\sigma}}{c_{\sigma}}(v-w).
\]
Since $s_v=c_\sigma t_\nu+b_\sigma v$ and $w$ extends $v$,
\[
 |r_w-t_\nu|
 =\frac{|b_\sigma|}{|c_\sigma|}|v-w|
 \leq\frac{|b_\sigma|}{|c_\sigma|}2^{-L'}
 <\frac{\rho}{4}.
\]
Thus $r_w$ lies in the interior of
$D_{\eta_\nu}=[t_\nu-\rho,t_\nu+\rho]$.
By the definition of $D_{\eta_\nu}$, every point of this interval
is the real value of a binary sequence extending $\eta_\nu$.
Choose such a sequence $\gamma_w\in[\eta_\nu]$ whose real value
is $r_w$.

Choose one integer $M'>M$ such that
\[
 2^{-M'}<\frac{\varepsilon}{2},
\]
and, for every $w$ with nonzero coefficient pair, set
$\theta_w=\gamma_w\res M'$.
Then $\theta_w$ extends $\eta_\nu$, and
\[
 r_w\in D_{\theta_w}=[\theta_w, \theta_w+2^{-M'}],\qquad
 \operatorname{diam}(D_{\theta_w})=2^{-M'}.
\]
Consequently, every $u\in D_{\theta_w}$ satisfies
\[
 \left|u-\frac{s_v-b_\sigma w}{c_\sigma}\right|
 =|u-r_w|\leq2^{-M'}<\frac{\varepsilon}{2}.
\]
For a zero coefficient pair, let $\theta_w$ be $\eta_\nu$
followed by $M'-M$ zeros. All the strings $\theta_w$ therefore
have the same length $M'$ and extend their previously assigned
strings. A dyadic value of $r_w$ causes no difficulty: it may be
an endpoint of $D_{\theta_w}$, since the estimate uses only the
diameter of that interval.

By \eqref{eq:tail}, whenever $\alpha\in[w]$ and $u\in D_{\theta_w}$,
\[
 \begin{aligned}
 \left|u-\frac{s_v-b_\sigma\alpha}{c_\sigma}\right|
 &\leq |u-r_w|
       +\left|r_w-\frac{s_v-b_\sigma\alpha}{c_\sigma}\right|\\
 &<\frac{\varepsilon}{2}+\frac{\varepsilon}{2}
   =\varepsilon.
 \end{aligned}
\]
 and so 
\[ |c_\sigma u+b_\sigma \alpha-s_v|<|c_\sigma|\varepsilon
\]
for a nonzero pair; the contribution of a zero pair is zero.

To check explicitly that this gives the required cover, take a sum
of the form \eqref{eq:allowed-sums}, with the new intervals:
\[
 T=z+\sum_{\sigma\in2^{n_{j-1}}}
       (c_\sigma u_\sigma+b_\sigma \alpha_\sigma),
 \qquad z\in B_j,\quad \alpha_\sigma\in[\sigma].
\]
For each $\sigma$, put
\[
 \nu_\sigma=\alpha_\sigma\res L,\qquad
 v_\sigma=\alpha_\sigma\res L',\qquad
 w_\sigma=\alpha_\sigma\res L''.
\]
Since $\alpha_\sigma\in[\sigma]$, these prefixes satisfy
\[
 v_\sigma\res n_{j-1}=\sigma,
 \qquad v_\sigma\res L=\nu_\sigma.
\]
We can therefore apply the earlier definition of $s_v$ with
$v=v_\sigma$ and $\nu=\nu_\sigma$, obtaining
\[
 s_{v_\sigma}=c_\sigma t_{\nu_\sigma}+b_\sigma v_\sigma,
 \qquad
 t_{\nu_\sigma}=\eta_{\nu_\sigma}+2^{-M-1}.
\]
In this formula, $t_{\nu_\sigma}$ is the midpoint of the old
assigned interval $D_{\eta_{\nu_\sigma}}$, and $v_\sigma$ is read
as its dyadic rational value. Thus $s_{v_\sigma}$ is not a new
choice: it is the constant already attached to the length-$L'$
prefix of $\alpha_\sigma$. The chosen prefixes $v_\sigma$ select one
particular sum from the finite set $S$.

The allowed value $u_\sigma$ belongs to $D_{\theta_{w_\sigma}}$.
For each nonzero coefficient pair, apply
\eqref{eq:cancellation-value} with $v=v_\sigma$ and $\alpha=\alpha_\sigma$,
and denote the resulting cancellation value by $\widetilde u_\sigma$:
\[
 \widetilde u_\sigma
 =\frac{s_{v_\sigma}-b_\sigma \alpha_\sigma}{c_\sigma}
 =t_{\nu_\sigma}
   +\frac{b_\sigma}{c_\sigma}(v_\sigma-\alpha_\sigma).
\]
Then the preceding estimate and the defining identity give
\[
 |u_\sigma-\widetilde u_\sigma|<\varepsilon,
 \qquad
 c_\sigma\widetilde u_\sigma+b_\sigma \alpha_\sigma=s_{v_\sigma}.
\]
The allowed value $u_\sigma$ need not equal $\widetilde u_\sigma$;
it is only required to approximate it. Consequently,
\[
 \left|c_\sigma u_\sigma+b_\sigma \alpha_\sigma-s_{v_\sigma}\right|
 =|c_\sigma|\,|u_\sigma-\widetilde u_\sigma|
 \leq |c_\sigma|\varepsilon.
\]
The same bound holds for a zero coefficient pair, since then
$c_\sigma=b_\sigma=s_{v_\sigma}=0$; in that case no cancellation
value needs to be defined.

Compare $T$ with the particular point
\[
 T_0=z+\sum_{\sigma\in2^{n_{j-1}}}s_{v_\sigma}.
\]
Each $v_\sigma$ is a length-$L'$ extension of $\sigma$, so the
definition of $S$ gives $\sum_\sigma s_{v_\sigma}\in S$.
As $z\in B_j$, it follows that $T_0\in B_j+S$.
The same $z$ occurs in both sums and cancels upon subtraction.
Thus, with all sums below taken over $\sigma\in2^{n_{j-1}}$,
\begin{align*}
 |T-T_0|
 &=\left|\sum_\sigma
       (c_\sigma u_\sigma+b_\sigma \alpha_\sigma-s_{v_\sigma})\right|\\
 &\leq\sum_\sigma
       \left|c_\sigma u_\sigma+b_\sigma \alpha_\sigma-s_{v_\sigma}\right|\\
 &\leq\varepsilon\sum_\sigma|c_\sigma|
   =C\varepsilon<\frac{\delta}{2}.
\end{align*}

Finally, $\delta=\operatorname{dist}(B_j+S,\R\setminus O)>0$
is a uniform distance from the compact set $B_j+S$ to the
complement of its open cover $O$. If $T\notin O$, then
$T\in\R\setminus O$, and $T_0\in B_j+S$ would imply
$|T-T_0|\geq\delta$, contradicting the estimate above.
Hence $T\in O$. This proves the required covering assertion for
every allowed sum, not just for the cancellation values.
Since each new assigned interval lies inside its
predecessor, all covering assertions obtained earlier are
preserved as well.

Now replace $L$ by $L''$, replace $M$ by the common length of the
strings $\theta_w$, and use $\eta_w=\theta_w$ as the current
assignment for the next substage.
After all $q_j=(2j+1)^{2\cdot2^{n_{j-1}}}$ substages have been
completed---that is, after every one of the $q_j$ coefficient
lists has been treated---set
\[
 n_j=L,\qquad m_j=M,\qquad
 \tau^j_\nu=\eta_\nu\quad(\nu\in2^{n_j}).
\]
Both lengths have increased, and every assigned string extends
the corresponding stage-$(j-1)$ string.
The union of the $q_j$ substage covers gives \eqref{eq:stage-cover} and
\eqref{eq:allowed-sums}. This completes the induction.

\medskip
\noindent\textit{Verification.}
The induction is now complete. For every $\beta\in2^\omega$,
let $g_\beta,h_\beta$ be the final generators given by
\eqref{eq:branch-generators}, and put
\[
 E=\{g_\beta:\beta\in2^\omega\}
       \cup\{h_\beta:\beta\in2^\omega\},
 \qquad A=B+\langle E\rangle_{\Z}.
\]
Thus $E$ contains the final generators for all infinite branches;
the variable $\beta$ is not a parameter fixed at a particular stage.
If two branches agree through $n_j$, their $g$-values belong to
the same interval of length $2^{-m_j}$. Their numerical values
are within $2^{-n_j}$, so their $h$-values are within
$2^{-n_j}+|\xi|2^{-m_j}$. Thus the two indexed families are continuous
in the Cantor topology, and $E$ is compact.

For each $j\geq1$, form $K_j$ from these same fixed generators,
using fresh branch variables $\beta_\sigma$:
\[
K_j=
\left\{
z+\sum_{\sigma\in2^{n_{j-1}}}
   \bigl(a_\sigma g_{\beta_\sigma}
         +b_\sigma h_{\beta_\sigma}\bigr)
\;\middle|\;
\begin{array}{l}
z\in B_j,\\[2pt]
a_\sigma,b_\sigma\in\mathbb Z,\quad
|a_\sigma|,|b_\sigma|\leq j,\\[2pt]
\beta_\sigma\in2^\omega,\quad
\beta_\sigma\upharpoonright n_{j-1}=\sigma\\
\text{for every }\sigma\in2^{n_{j-1}}
\end{array}
\right\}.
\]
Here each $\beta_\sigma$ ranges independently over all infinite
extensions of $\sigma$; these branches are not inherited from
choices made during the induction. The index $j$ specifies
$B_j$, the prefix length, and the coefficient bounds, whereas
$g_{\beta_\sigma}$ and $h_{\beta_\sigma}$ are always the final
limit generators. Thus $K_j\subseteq A$, and $K_j$ is not the
set obtained by allowing generator values to range freely over
the stage-$j$ dyadic intervals.

For each fixed coefficient list, the set of sums is the continuous
image of the compact space
\[
 B_j\times\prod_{\sigma\in2^{n_{j-1}}}[\sigma].
\]
There are finitely many coefficient lists, so $K_j$ is compact.
To see that the stage-$j$ cover covers $K_j$, take any branches
$\beta_\sigma$ occurring in its definition and apply
\eqref{eq:allowed-sums} with
\[
 \alpha_\sigma=\beta_\sigma,\qquad
 u_\sigma=g_{\beta_\sigma}
       \in D_{\tau^j_{\beta_\sigma\res n_j}}.
\]
The interval membership follows from the nesting of the assigned
strings, and
\[
 a_\sigma g_{\beta_\sigma}+b_\sigma h_{\beta_\sigma}
 =(a_\sigma-\xi b_\sigma)g_{\beta_\sigma}
       +b_\sigma\beta_\sigma.
\]
Hence every defining sum for $K_j$ is one of the allowed sums.
Also $K_j\subseteq K_{j+1}$: retain each used branch and its
coefficients on its longer prefix, and assign zero pairs elsewhere.
Finally,
\begin{equation}\label{eq:group-union}
 A=\bigcup_{j\geq1}K_j.
\end{equation}
Indeed, in any finite sum, combine terms with the same branch label.
All remaining labels have distinct sufficiently long prefixes,
and all coefficients and the chosen element of $B$ are allowed at
every sufficiently large stage. This also proves that $A$ is
$F_\sigma$.

Fix $r$ and $s>d$. For all sufficiently large $j\geq r$,
$d+\frac{1}{j}<s$, and the cover of $K_j$ also covers $K_r$. Hence
\[
 \sum_{I\in\mathcal I_j}|I|^s
 \leq\sum_{I\in\mathcal I_j}|I|^{d+\frac{1}{j}}<2^{-j},
 \qquad \max_{I\in\mathcal I_j}|I|<2^{-j}.
\]
It follows that $\HH^s(K_r)=0$. By \eqref{eq:group-union},
$\dimH A\leq d$; the inclusion $B\subseteq A$ gives equality.
For every $\beta\in2^\omega$ we have
\[
 \beta=h_\beta+\xi g_\beta\in A+\xi A.
\]
The numerical values of these $\beta$ fill $[0,1]$. Thus $A+\xi A$ is
a subgroup containing an interval, and consequently equals $\R$.
\end{proof}

For the initial group needed in Theorem~\ref{thm:main}, we use
the classical construction of Erd\H{o}s and Volkmann.

\begin{lemma}[Erd\H{o}s--Volkmann, {\cite[Satz~1]{EV}}]
\label{lem:initial-groups}
For every $d\in[0,1)$ there exists an $F_\sigma$ additive subgroup
$B\subseteq\R$ with $\dimH B=d$.
\end{lemma}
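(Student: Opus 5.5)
We build $B$ directly from a sparse sequence of denominators, in the Erd\H{o}s--Volkmann manner. The case $d=0$ is immediate: $B=\Z$ is a closed, hence $F_\sigma$, subgroup of dimension $0$. So assume $0<d<1$.

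Fix a rapidly increasing sequence of integers $N_1<N_2<\cdots$, with $N_k\mid N_{k+1}$, and tolerances $\varepsilon_k=N_k^{-1/d}$. Define
\[
 B=\bigl\{x\in\R:\ \|N_kx\|\leq c_k\,N_k\varepsilon_k\ \text{for all sufficiently large }k\bigr\},
\]
where $\|\cdot\|$ denotes distance to the nearest integer and $c_k\to\infty$ slowly, for instance $c_k=k$. The defining condition is not additive for a fixed $k$; the point of letting $c_k$ grow is to absorb this. If $x$ and $y$ both satisfy the bound with constant $c_k$, then $x\pm y$ satisfies it with constant $2c_k$. Since every element of $B$ is a bounded sum, it suffices to show that $B$ is closed under $x\mapsto x\pm y$ with the index shifted. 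Concretely, put
\[
 B=\bigcup_{m\ge1}\ \bigcap_{k\geq m}\bigl\{x:\ |x|\leq m,\ \|N_kx\|\leq 2^{m}k^{-1}\cdots\bigr\}.
\]
The form of the threshold needs tuning. Cleaner is the standard device: take
\[
 F_m=\bigl\{x:|x|\le m,\ \|N_kx\|\le m\,N_k\varepsilon_k\ \ \forall k\ge m\bigr\},
\]
which is closed. Then $F_m\pm F_m\subseteq F_{2m}$, because $\|\cdot\|$ is subadditive and the index condition $k\ge 2m$ only weakens it. Hence $B=\bigcup_m F_m$ is an $F_\sigma$ subgroup, with $0\in F_1$.

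\textbf{Upper bound.} Cover $F_m$ at scale $k$. Each $x\in F_m$ lies within $m\varepsilon_k$ of some $p/N_k$ with $|p|\le (m+1)N_k$. This gives $O(mN_k)$ intervals of length $2m\varepsilon_k$, so
\[
 \HH^s_{\infty}(F_m)\lesssim m^{1+s}N_k\varepsilon_k^{s}=m^{1+s}N_k^{1-s/d}\to0\quad\text{for } s>d.
\]
Hence $\dimH B\le d$. For this count to be right, the constraints from earlier $k$ must not multiply the number of intervals. They do not, since we simply discard them when covering.

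\textbf{Lower bound (the main obstacle).} We build a Cantor-type subset of $F_1$ and apply the mass distribution principle. The subset consists of points whose successive approximations $p_k/N_k$ are forced by the nesting $|x-p_k/N_k|\le\varepsilon_k$. Inside an interval of length $\varepsilon_k$ around $p_k/N_k$ there are about $N_{k+1}\varepsilon_k$ rationals $p/N_{k+1}$. Choosing the $N_k$ so that $N_{k+1}\varepsilon_k\gg1$ and $\log N_{k+1}\gg\log N_k$ (super-exponential growth) gives a tree with branching $\approx N_{k+1}\varepsilon_k$ at level $k$. The uniform measure then satisfies $\mu(I)\lesssim|I|^{d-o(1)}$ for intervals $I$ at all scales between $\varepsilon_{k+1}$ and $\varepsilon_k$. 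Two regimes need checking: intervals smaller than the spacing $1/N_{k+1}$, and intervals between $1/N_{k+1}$ and $\varepsilon_k$. In the second regime the mass grows linearly with the number of rationals, and $d<1$ makes this compatible with exponent $d$.

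This interpolation computation is the delicate step, and the growth condition on $N_k$ is chosen to make it work. The outcome is $\dimH F_1\ge d$, hence $\dimH B=d$. Alternatively, as the statement is precisely Satz~1 of \cite{EV}, we may simply cite that construction and verify that its group is an increasing union of compact sets, as above.
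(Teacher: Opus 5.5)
Your proposal is correct, and it takes a genuinely different route from the paper. The paper gives no argument of its own: it takes $B=\{0\}$ for $d=0$ and, for $0<d<1$, cites Satz~1 of Erd\H{o}s--Volkmann, asserting that the $F_\sigma$ property can be read off from the digit description in their proof. That is the alternative you mention in your last sentence.

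Your Diophantine version makes the structure explicit instead. With $\varepsilon_k=N_k^{-1/d}$, the sets $F_m=\{x:|x|\le m,\ \|N_kx\|\le mN_k\varepsilon_k\ (k\ge m)\}$ are compact, increasing, and satisfy $F_m-F_m\subseteq F_{2m}$. So the group property and the $F_\sigma$ (indeed $K_\sigma$) property are immediate, in exactly the form the proof of Theorem~\ref{thm:extension} consumes: an increasing compact exhaustion. The price is that you must redo the dimension computation, which the citation avoids.

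Your covering bound $\dimH B\le d$ is complete. The lower bound is only outlined, but it works once you do the following:
\begin{itemize}
\item Discard the false start. The version with $c_k=k$ is not closed under addition, and the display containing $2^mk^{-1}\cdots$ is unfinished.
\item Impose $\varepsilon_kN_{k+1}\to\infty$ and $\sum_{i<k}\log N_i=o(\log N_k)$.
\item Note that each level-$k$ interval has about $2\varepsilon_kN_{k+1}$ children. Hence $\mu(I_k)\approx 2^{-k}N_k^{-1}\prod_{i<k}N_i^{1/d-1}\le C_s|I_k|^s$ for every $s<d$.
\item If an interval $J$ has length $r\in[\varepsilon_{k+1},1/N_{k+1}]$, it meets at most two level-$(k+1)$ intervals, so $\mu(J)\lesssim r^s$.
\item If $r\in[1/N_{k+1},\varepsilon_k]$, then $J$ meets $O(rN_{k+1})$ level-$(k+1)$ intervals. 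Hence $\mu(J)\lesssim (r/\varepsilon_k)\mu(I_k)\lesssim r\varepsilon_k^{s-1}\le r^s$, using $s<1$.
\end{itemize}
The mass distribution principle then gives $\dimH F_1\ge d$.
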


For $d=0$, take $B=\{0\}$. For $0<d<1$, the assertion follows
from \cite[Satz~1]{EV}; the $F_\sigma$ property follows from the
digit description of the groups in its proof.

\begin{proof}[Proof of Theorem~\ref{thm:main}]
Choose the initial group $B$ by Lemma~\ref{lem:initial-groups}, and
apply Theorem~\ref{thm:extension} to obtain an $F_\sigma$ subgroup
$C\supseteq B$ with $\dimH C=d$ and $C+\xi C=\R$.
Take its divisible hull
\[
 A=C^{\mathrm{div}}
   :=\bigcup_{n\geq1}\frac{1}{n!}C
   =\operatorname{span}_{\Q}C.
\]
The groups $\frac{1}{n!}C$ are increasing, and every positive
integer divides some factorial. Their union is therefore a
$\Q$-vector space. It is $F_\sigma$, since each scaled copy of
$C$ is $F_\sigma$, and countable stability of Hausdorff dimension
gives
\[
 \dimH A=\sup_{n\geq1}\dimH\left(\frac{1}{n!}C\right)
        =\dimH C=d.
\]
Finally, $C\subseteq A$ implies
$\R=C+\xi C\subseteq A+\xi A\subseteq\R$.
\end{proof}

\begin{corollary}[Countably many prescribed multipliers]
\label{cor:countable-multipliers}
Let $B\subseteq\R$ be an $F_\sigma$ additive subgroup, and let
$D\subseteq\R\setminus\Q$ be countable. There is an $F_\sigma$
$\Q$-vector subspace $A\subseteq\R$ such that
\[
 B\subseteq A,\qquad \dimH A=\dimH B,\qquad D\subseteq X_A.
\]
In particular, for every $d\in[0,1)$ and every countable
$D\subseteq\R\setminus\Q$, such a space can be chosen with
$\dimH A=d$.
\end{corollary}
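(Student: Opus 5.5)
We iterate Theorem~\ref{thm:extension} along an enumeration of $D$ and then pass to the divisible hull, as in the proof of Theorem~\ref{thm:main}. If $D=\varnothing$, take $A$ to be the divisible hull of $B$. Otherwise, enumerate $D=\{\xi_1,\xi_2,\dots\}$, repeating elements if $D$ is finite. Put $A_0=B$. Given the $F_\sigma$ subgroup $A_{k-1}$, apply Theorem~\ref{thm:extension} with multiplier $\xi_k$ to obtain an $F_\sigma$ subgroup $A_k\supseteq A_{k-1}$ with
\[
 \dimH A_k=\dimH A_{k-1}=\dimH B,\qquad A_k+\xi_kA_k=\R .
\]
Set $C=\bigcup_{k\geq0}A_k$.

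\emph{Properties of $C$.} Since the $A_k$ are increasing, $C$ is a subgroup. It is $F_\sigma$, being a countable union of $F_\sigma$ sets. Countable stability of Hausdorff dimension gives $\dimH C=\sup_k\dimH A_k=\dimH B$. Surjectivity is monotone in the group: for each $k$,
\[
 \R=A_k+\xi_kA_k\subseteq C+\xi_kC\subseteq\R,
\]
so $C+\xi_kC=\R$. Hence $D\subseteq X_C$.

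\emph{Passage to a $\Q$-vector space.} Let $A=C^{\mathrm{div}}=\bigcup_{n\geq1}\frac1{n!}C$, exactly as in the proof of Theorem~\ref{thm:main}. This is an $F_\sigma$ $\Q$-vector space with $\dimH A=\dimH C=\dimH B$ and $B\subseteq C\subseteq A$. Since $C\subseteq A$, the same monotonicity gives $X_C\subseteq X_A$, so $D\subseteq X_A$. For the ``in particular'' clause, take $B$ from Lemma~\ref{lem:initial-groups} with $\dimH B=d$.

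\emph{Main obstacle.} The one point needing care is that the limit step stays inside the class where Theorem~\ref{thm:extension} applies and keeps the dimension. Each $A_k$ is $F_\sigma$, so the theorem can be reapplied at every finite step. No transfinite limit is needed, since $D$ is countable. The final union is controlled by countable stability of $\dimH$ together with closure of $F_\sigma$ sets under countable unions. There is no need to re-verify surjectivity at the limit, because it is inherited upward from each $A_k$.
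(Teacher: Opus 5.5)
Your proposal is correct and follows the paper's proof essentially step for step. You iterate Theorem~\ref{thm:extension} along an enumeration of $D$ (repeating elements if $D$ is finite), take the increasing union, and pass to the divisible hull. You also handle the empty case and the ``in particular'' clause via Lemma~\ref{lem:initial-groups}, exactly as the paper does.
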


\begin{proof}
If $D=\varnothing$, take $A=B^{\mathrm{div}}$. Otherwise, write
$D=\{\xi_n:n\geq1\}$, allowing repetitions when $D$ is finite.
Starting with $C_0=B$, apply Theorem~\ref{thm:extension}
successively to obtain $F_\sigma$ subgroups $C_n$ satisfying
\[
 C_{n-1}\subseteq C_n,\qquad
 \dimH C_n=\dimH B,\qquad C_n+\xi_n C_n=\R.
\]
Then $C=\bigcup_{n\geq0}C_n$ is an $F_\sigma$ subgroup of the
same dimension as $B$. For every $n\geq1$,
$C+\xi_n C\supseteq C_n+\xi_n C_n=\R$.
Taking $A=C^{\mathrm{div}}$, as in the preceding proof,
preserves the dimension, the $F_\sigma$ property, and all these
surjectivity assertions. The final statement follows by
choosing $B$ from Lemma~\ref{lem:initial-groups}.
\end{proof}

Thus even a prescribed countable dense set of irrational
multipliers is compatible with an $F_\sigma$ group of dimension
zero. Theorem~\ref{thm:borel-universal} treats all irrational
multipliers at once, but its group cannot be $F_\sigma$.

\begin{remark}[Effectivity]\label{rem:effective-construction}
For the effective descriptive-set-theoretic terminology used
here, see Moschovakis \cite[Chapter~3]{Moschovakis}.
The construction in Theorem~\ref{thm:extension} relativizes.
Suppose that $B$ is a $\Sigma^0_2(x)$ additive subgroup of $\R$,
$d=\dimH B$, and $\xi$ is irrational. Then there is an additive
subgroup $A\in\Sigma^0_2(x\oplus d\oplus\xi)$ such that
\[
 B\subseteq A,\qquad \dimH A=d,\qquad A+\xi A=\R.
\]
Here reals used as oracles are given by standard codes, and
$\Sigma^0_2(z)$ means a union of a uniformly $\Pi^0_1(z)$
sequence of sets. Put $z=x\oplus d\oplus\xi$.

Write $B=\bigcup_{i\geq1}F_i$, where the $F_i$ are uniformly
$\Pi^0_1(x)$, and take
\[
 B_j=[-j,j]\cap\left(\{0\}\cup\bigcup_{i\leq j}F_i\right).
\]
These form an increasing compact exhaustion of $B$ and are
uniformly effectively compact relative to $x$: finite rational
open covers of $B_j$ can be enumerated relative to $x$.
Indeed, such a family covers $B_j$ exactly when, together with
the enumerated complement of $B_j$, it covers $[-j,j]$.

Using the oracle $d$, choose rational exponents $s_j$ with
\[
 d<s_j<d+\frac{1}{j},
\]
and replace $d+\frac{1}{j}$ at stage $j$ by $s_j$.
Every nonzero $c_\sigma=a_\sigma-\xi b_\sigma$ is a nonzero
$z$-computable real, so a positive rational lower bound for
$|c_\sigma|$ can be found by successive approximation. The
required prefix lengths and the constants $s_v$ are therefore
computable relative to $z$. Keep the sums defining $S$ as a
finite list, allowing repetitions, to avoid deciding equality
between computable reals. The compact set $B_j+S$ is effectively
compact relative to $z$, and $\mathcal H^{s_j}(B_j+S)=0$.
Thus a search through finite rational open covers finds a family
$\mathcal I$ with
\[
 |I|<2^{-j}\quad(I\in\mathcal I),\qquad
 \sum_{I\in\mathcal I}|I|^{s_j}<\frac{2^{-j}}{q_j}.
\]
Both the covering condition and the strict content bound are
computably enumerable relative to $z$, and existence guarantees that the
search terminates. A positive rational safety margin can likewise
be found by searching for a common rational amount by which all
the intervals can be shrunk while still covering $B_j+S$.
Use this margin in place of $\delta$ when choosing a rational
$\varepsilon>0$.

One choice in the proof needs a small reformulation: we should
not try to compute a binary expansion of $r_w$, since this cannot
be done uniformly at dyadic rationals. Instead, dovetail a search
over finite extensions $\theta$ of $\eta_\nu$, of length greater
than $M$, for which
\[
 r_w-\frac{\varepsilon}{2}<\theta,
 \qquad
 \theta+2^{-|\theta|}<r_w+\frac{\varepsilon}{2}.
\]
These strict inequalities are computably enumerable relative to $z$, and a suitable
extension exists because $r_w$ lies in the interior of
$D_{\eta_\nu}$. Thus every $u\in D_\theta$ satisfies
$|u-r_w|<\varepsilon/2$, which is all the subsequent argument
uses; we do not require $r_w\in D_\theta$. After making these
finitely many choices, extend all the strings to a common length.
Zero coefficient pairs are handled by appending zeros as before.

Consequently the stage data are computable relative to $z$, and the maps
$\beta\mapsto g_\beta$ and $\beta\mapsto h_\beta$ from Cantor space to $\R$ are
$z$-computable, with effective moduli supplied by $n_j$ and $m_j$.
Hence $K=\{0\}\cup E$, with $E$ the full final generator set
defined above, is effectively compact relative to $z$, and
\[
 A=B+\langle K\rangle_{\Z}
   =\bigcup_{j,m\geq1}\bigl(B_j+m(K-K)\bigr),
\]
Here $K-K=\{u-v:u,v\in K\}$, and for $m\geq1$ we write
\[
 m(K-K)
 =\underbrace{(K-K)+\cdots+(K-K)}_{m\text{ summands}}
 =\left\{\sum_{i=1}^m(u_i-v_i):u_i,v_i\in K\right\}.
\]
Thus $m(K-K)$ denotes an $m$-fold Minkowski sum. Since $0\in K$, repetitions and
zero terms allow every finite integer combination of elements of
$K$ to be written in this form for some $m$, which justifies the
union above. The sets in this union are uniformly effectively
compact relative to $z$, hence uniformly $\Pi^0_1(z)$.
Thus $A\in\Sigma^0_2(z)$.
Since $s_j\to d$, the dimension argument is unchanged, as is
the identity $h_\beta+\xi g_\beta=\beta$ proving surjectivity.
All searches use only $z$.
In particular, if $B$ is lightface $\Sigma^0_2$ with
$\dimH B=0$ and $\xi$ is a computable irrational, then $A$ can
also be chosen lightface $\Sigma^0_2$ of dimension zero.
This includes the case $B=\{0\}$.
The passage to the divisible hull also preserves these effective
bounds: the scaled sets $\frac{1}{n!}A$ are uniformly
$\Sigma^0_2(z)$, so their union is $\Sigma^0_2(z)$.
Hence the extension may also be chosen to be a $\Q$-vector space.
\end{remark}

\section{A Borel group for all irrational multipliers}
\label{sec:borel-universal}

The second construction uses pairs of binary prefixes. One prefix
specifies the multiplier, and the other specifies the real to be
represented. We first choose a common zero-dimensional target for
all nonzero rational combinations of the generators.

\begin{lemma}\label{lem:zero-dimensional-target}
There is a dense lightface $\Pi^0_2$ set
$G\subseteq\R\setminus\{0\}$ with $\dimH G=0$.
\end{lemma}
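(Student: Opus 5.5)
We will realize $G$ as a limsup set of shrinking neighbourhoods of a computable dense set of nonzero points, with radii decaying fast enough to force Hausdorff dimension zero.

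\emph{The countable dense set.} Enumerate the nonzero dyadic rationals effectively as $p_1,p_2,\dots$. This set is dense in $\R$, and the enumeration is computable. For $k\geq1$ put
\[
 U_k=\bigcup_{i\geq1}\bigl(p_i-2^{-(i+k)^2},\,p_i+2^{-(i+k)^2}\bigr)\setminus\{0\},
 \qquad G=\bigcap_{k\geq1}U_k .
\]
Each $U_k$ is a lightface $\Sigma^0_1$ open set, uniformly in $k$: it is a union of rational open intervals, enumerated computably, minus the point $0$. Removing $0$ keeps the set effectively open, since $\R\setminus\{0\}=(-\infty,0)\cup(0,\infty)$. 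Hence $G$ is lightface $\Pi^0_2$, and $G\subseteq\R\setminus\{0\}$.

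\emph{Density.} Every $p_i$ lies in every $U_k$, because $p_i\neq0$. So $G$ contains all nonzero dyadic rationals and is dense. Since density is immediate here, we can afford to make $G$ contain the points $p_i$ themselves, rather than relying on a Baire-category argument.

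\emph{Dimension zero.} This is the only estimative step. Fix $s>0$. For each $k$, the family of intervals of $U_k$ with index $i$ covers $G$ (ignoring the removed point $0$, which only shrinks the set). Their diameters are at most $2^{1-(1+k)^2}\to0$, and their $s$-content satisfies
\[
 \sum_{i\geq1}\bigl(2\cdot2^{-(i+k)^2}\bigr)^s\leq 2^s\sum_{n\geq k+1}2^{-sn^2}\longrightarrow0
 \qquad(k\to\infty).
\]
Therefore $\HH^s(G)=0$ for every $s>0$, and so $\dimH G=0$. Only the superexponential decay matters; any radii $r_{i,k}$ with $\sum_i r_{i,k}^s\to0$ for all $s>0$ would work.

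There is no real obstacle in this argument. The only points needing care are uniformity of the effective description and the exclusion of $0$. If the later construction needs $G$ to be closed under nonzero rational multiples, the same scheme works with the enumeration replaced by all $q p_i$ for nonzero rationals $q$; alternatively, one can intersect over rational scalings. Since a countable intersection of uniformly $\Pi^0_2$ sets remains $\Pi^0_2$ and dimension only decreases, this refinement costs nothing.
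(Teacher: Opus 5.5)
Your proof is correct and is essentially the paper's construction. Both take a $G_\delta$ intersection of unions of superexponentially shrinking rational intervals around a computable dense set of nonzero points; the set is dense because it contains those points, and zero-dimensional because the $s$-contents of the covers tend to zero. The paper differs only cosmetically: it centres the intervals at all nonzero rationals and caps the radii by $|q_\ell|/2$, so that $0$ is excluded automatically rather than deleted, and its radii $2^{-n(n+\ell)-2}$ give $|I_{n,\ell}|^{1/n}<2^{-n-\ell}$, hence $\HH^s_{2^{-n}}(G)<2^{-n}$ for $n\geq 1/s$. Your closing remark about rational scalings is not needed anywhere in the paper.
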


\begin{proof}
Fix a computable enumeration $(q_\ell)_{\ell\geq1}$ of
$\Q\setminus\{0\}$. For $n,\ell\geq1$, put
\[
 r_{n,\ell}=\min\left\{\frac{|q_\ell|}{2},
                              2^{-n(n+\ell)-2}\right\},\qquad
 I_{n,\ell}=(q_\ell-r_{n,\ell},q_\ell+r_{n,\ell}).
\]
These intervals have uniformly computable rational endpoints,
contain $q_\ell$ but not $0$, and satisfy
\[
 |I_{n,\ell}|<2^{-n},\qquad
 |I_{n,\ell}|^{1/n}<2^{-n-\ell}.
\]
Put $O_n=\bigcup_{\ell\geq1}I_{n,\ell}$ and
$G=\bigcap_{n\geq1}O_n$. Each $O_n$ is open and dense, and $G$
contains $\Q\setminus\{0\}$, so $G$ is dense as well.
The sets $O_n$ are uniformly effectively open, so $G$ is
lightface $\Pi^0_2$.
For every $s>0$ and all sufficiently large $n$,
\[
 \HH^s_{2^{-n}}(G)
 \leq\sum_\ell|I_{n,\ell}|^s
 \leq\sum_\ell|I_{n,\ell}|^{1/n}<2^{-n}.
\]
Thus $\dimH G=0$ and $0\notin G$.
\end{proof}

\medskip
\noindent\textit{Idea of the proof and comparison with
Theorem~\ref{thm:main}.}
Theorem~\ref{thm:main}, through Theorem~\ref{thm:extension}, treats
one fixed irrational multiplier $\xi$. For each $\beta\in2^\omega$ it
chooses $g_\beta$ and defines $h_\beta=\beta-\xi g_\beta$, so that surjectivity is
automatic once the generators belong to $A$. We now use the same
idea simultaneously for all irrational multipliers. It suffices to
consider $\xi\in(1,2)\setminus\Q$, since the group we construct will
be a $\Q$-vector space and $A+(\xi+m)A=A+\xi A$ for $m\in\Z$.
Thus we choose a generator $g_{\xi,\beta}$ for every pair $(\xi,\beta)$ and put
\[
 h_{\xi,\beta}=\beta-\xi g_{\xi,\beta}.
\]
For each fixed $\xi$, the identity $h_{\xi,\beta}+\xi g_{\xi,\beta}=\beta$ will give
$[0,1]\subseteq A+\xi A$, and hence $A+\xi A=\R$.

The common mechanism in the two proofs is cancellation. A rational
combination of the two generators with the same parameter is
\[
 a g_{\xi,\beta}+b h_{\xi,\beta}
     =(a-b\xi)g_{\xi,\beta}+b\beta.
\]
At a finite stage, use an arbitrary test parameter $(\xi,\alpha)$
in a small parameter rectangle and the cancellation value
\[
 \widetilde u=\frac{v-b\alpha}{a-b\xi},
\]
where $v$ is a constant chosen near the value of the expression at
the old midpoint. Then $(a-b\xi)\widetilde u+b\alpha=v$,
independently of $(\xi,\alpha)$ in that rectangle. As in the first
proof, we do not retain these exact cancellation values: we use
them to choose smaller assigned dyadic intervals for the generators.

There is one new difficulty. For fixed $\xi$, the denominator
$a-b\xi$ is a nonzero constant whenever $(a,b)\neq(0,0)$.
When $\xi$ varies, $a-b\xi$ can be arbitrarily close to zero. We
therefore divide only on rectangles whose entire multiplier
interval stays away from its zero. A coefficient choice that does
not yet have this property is left untreated at that stage.
This loses no eventual requirement: for a fixed irrational $\xi$
and a fixed nonzero rational pair $(a,b)$, we have $a-b\xi\neq0$,
so sufficiently short multiplier prefixes give the required bound.
Meanwhile, refining both parameter prefixes separates every fixed
finite collection of distinct pairs $(\xi,\beta)$.

There is also a change in how we keep the sums small. In the first
proof, cancellation reduces one coefficient choice to a finite set
$S$, and we cover $B_j+S$ by intervals of small Hausdorff content.
Here we have no prescribed subgroup $B$ to retain. Instead, we
fix the zero-dimensional set $G$ from the lemma and write
$G=\bigcap_j G_j$, where the $G_j$ are decreasing dense open sets
omitting $0$. Cancellation again produces a finite set $S$.
The density of $G_j$ lets us move all of $S$ into $G_j$ by one
arbitrarily small translation $\delta$: choose
\[
 \delta\in\bigcap_{s\in S}(G_j-s)
\]
near $0$. We implement this by adding $\delta$ to the constants
on all refined pieces of one used old cell. Each sum uses that
cell exactly once, so every sum increases by exactly $\delta$.
Openness then allows small errors, and we shrink the assigned
intervals until every permitted sum lies in $G_j$.

Later steps only shrink those intervals, preserving all earlier
requirements. Every fixed nontrivial rational combination is
treated at all sufficiently late stages, so its limit belongs to
$G$. This gives both dimension zero and rational independence:
the combination cannot vanish because $0\notin G$.
For Theorem~\ref{thm:main}, the parameter space $2^\omega$ is
compact, and compact images give the $F_\sigma$ conclusion.
Here the irrational-multiplier parameter space is Polish but not
compact. Rational independence makes the maps coding finite sums
injective, so Lusin--Souslin gives Borelness instead. The sharper
complexity calculation is given in
Remark~\ref{rem:borel-complexity} below.

\medskip
\begin{proof}[Proof of Theorem~\ref{thm:borel-universal}]
Fix the sets $O_n$ above, and put
\[
 G_j=\bigcap_{n=1}^jO_n,\qquad G=\bigcap_{j\geq1}G_j.
\]
Each $G_j$ is dense and open and omits $0$. We reserve $G_j$
for these target sets in $\R$; the letters $U,V,W,T$ below denote
cells in the parameter space.
Let $\mathcal J=(1,2)\setminus\Q$ and $P=\mathcal J\times2^\omega$.
For $(\xi,\beta)\in P$ we construct a real $g_{\xi,\beta}$ and put
\[
 h_{\xi,\beta}=\beta-\xi g_{\xi,\beta}.
\]
We shall arrange that every nontrivial finite rational combination
of this indexed family belongs to $G$.

For the finite-stage requirements, use arbitrary test parameters
$(\xi,\alpha)\in P$. Since $\xi-1$ is irrational, its binary expansion
is unique.
At level $n$ the pairs of prefixes $\sigma,\tau\in2^n$ give the
partition of $P$ into $4^n$ sets
\begin{equation}\label{eq:binary-rectangles}
 U_{\sigma,\tau}
 =\{(\xi,\alpha)\in P:\sigma\preceq \xi-1,\ \tau\preceq \alpha\}.
\end{equation}
These sets are nonempty and clopen in $P$. Their numerical
coordinates lie in the closed rectangles
\[
 (1+D_\sigma)\times D_\tau.
\]
Recall that $D_\sigma=[\sigma,\sigma+2^{-|\sigma|}]$. The rational endpoints in the first coordinate are not in
$\mathcal J$. Increasing $n$ refines the partition and eventually
separates any finitely many distinct parameter pairs.

As before, choose increasing lengths $n_j,m_j$ and strings
\[
 \theta^j_{\sigma,\tau}\in2^{m_j}
 \qquad(\sigma,\tau\in2^{n_j}),
\]
each extending its predecessor
$\theta^{j-1}_{\sigma\res n_{j-1},\tau\res n_{j-1}}$.
Begin with $n_0=0$, $m_0=2$, and
$\theta^0_{\varnothing,\varnothing}=01$.
Their branchwise unions will define the final generators
$g_{\xi,\beta}$.

\medskip
\noindent\textit{Stage $j$.}
Let
\[
 Q_j=\left\{\frac{p}{q}:p\in\Z,\ q\in\N,\ |p|\leq j,
                         \ 1\leq q\leq j\right\}.
\]
These are finite increasing sets whose union is $\Q$.
Here $p,q$ are integer indices; we reserve $a_i,b_i$ for the
rational coefficients used in this proof.

List the cells inherited from
stage $j-1$, namely all $U_{\sigma,\tau}$ with
$\sigma,\tau\in2^{n_{j-1}}$, as $U_1,\ldots,U_k$,
where $k=4^{n_{j-1}}$.  If $U_i=U_{\sigma_i,\tau_i}$, put
$J_i=1+D_{\sigma_i}$.
Consider all choices $(a_i,b_i)\in Q_j^2$, not all zero, for which
\begin{equation}\label{eq:denominator}
 \kappa_i:=\min_{\xi\in J_i}|a_i-b_i\xi|>0
 \quad\text{whenever }(a_i,b_i)\neq(0,0).
\end{equation}
Each admissible coefficient list gives one substage of stage $j$.
There are at most $|Q_j|^{2k}-1$ such substages; lists failing
\eqref{eq:denominator} are omitted. We require, for each admissible list,
\begin{equation}\label{eq:universal-stage}
 \sum_{i=1}^k\bigl((a_i-b_i\xi_i)u_i+b_i\alpha_i\bigr)\in G_j
\end{equation}
whenever $(\xi_i,\alpha_i)\in U_i$ and
\[
 u_i\in
 D_{\theta^j_{(\xi_i-1)\res n_j,\,\alpha_i\res n_j}}.
\]
These are universal requirements over all test parameters
$(\xi_i,\alpha_i)\in U_i$ and all allowed $u_i$; the induction does
not select particular infinite branches. Zero pairs leave cells
unused. Some choices may fail \eqref{eq:denominator} at early
stages, but every fixed finite combination will be treated at all
sufficiently late stages.

Keep the old cells $U_i$ fixed and process these finitely many
substages in any order. Consider one such substage. Suppose the current parameter level
is $L$ and each current cell $V$ has an assigned string
$\eta_V\in2^M$. Write
\[
 t_V=\eta_V+2^{-M-1},\qquad \rho=2^{-M-1}.
\]
Refine to a common level $N>L$ so that, on every refined cell
$W\subseteq V\subseteq U_i$ with nonzero pair, the following holds.
If $W=U_{\sigma,\tau}$, put
\[
 r_W=1+\sigma,\qquad s_W=\tau,\qquad
 v_W=(a_i-b_ir_W)t_V+b_is_W.
\]
We choose $N$ large enough that, for every refined cell
$W=U_{\sigma,\tau}\subseteq V\subseteq U_i$ with
$(a_i,b_i)\neq(0,0)$ and every
\[
 (\xi,\alpha)\in(1+D_\sigma)\times D_\tau
 =[r_W,r_W+2^{-N}]\times[s_W,s_W+2^{-N}],
\]
the following estimate holds:\begin{align*}
 |(a_i-b_i\xi)t_V+b_i\alpha-v_W|
 &\leq |b_i|\bigl(|t_V|\,|\xi-r_W|+|\alpha-s_W|\bigr)\\
 &\leq |b_i|(|t_V|+1)2^{-N}<\kappa_i\rho/4.
\end{align*}
Only finitely many constants occur, so one $N$ suffices.
Consequently
\begin{equation}\label{eq:universal-cancellation}
 \left|\frac{v_W-b_i\alpha}{a_i-b_i\xi}-t_V\right|<\rho/4
\end{equation}
throughout that rectangle. Let
\[
 \widetilde u=\frac{v_W-b_i\alpha}{a_i-b_i\xi}.
\]
Then
\[
 (a_i-b_i\xi)\widetilde u+b_i\alpha
 =(a_i-b_i\xi)\frac{v_W-b_i\alpha}{a_i-b_i\xi}
   +b_i\alpha
 =v_W.
\]
For a zero pair put $v_W=0$ and make no division.

With the current coefficient choice and refinement fixed, define
\[
 S=\left\{\sum_{i=1}^kv_{W_i}\;\middle|\;
 \begin{array}{l}
 W_i\in\{U_{\sigma,\tau}:\sigma,\tau\in2^N\},\\
 W_i\subseteq U_i\quad\text{for every }1\leq i\leq k
 \end{array}\right\}.
\]
Here $(W_1,\ldots,W_k)$ ranges over all independent choices of
one level-$N$ subcell of each old cell $U_i$; $v_{W_i}$ is the
constant already attached to that subcell. Thus $S$ contains all
such sums, not just one sum arising from a selected tuple of
parameters. There are finitely many subcells, so $S$ is nonempty
and finite. Choose an index $i_*$ with nonzero pair.
Since $G_j$ is dense and open, so is
$\bigcap_{s\in S}(G_j-s)$. Choose a rational $\delta$ in this
intersection with
\[
 |\delta|<\kappa_{i_*}\rho/4.
\]
Then $S+\delta\subseteq G_j$.
Replace $v_W$ by $v'_W=v_W+\delta$ for every $W\subseteq U_{i_*}$,
and put $v'_W=v_W$ on the other cells.
Every sum uses exactly one parameter from $U_{i_*}$, so every sum
of the new constants is increased by exactly $\delta$.
For every $W\subseteq V\subseteq U_i$ with nonzero coefficient
pair, and every numerical point $(\xi,\alpha)$ in the closed
rectangle of $W$, the translated cancellation value satisfies
\begin{equation}\label{eq:translated-cancellation-bound}
 \begin{aligned}
 \left|\frac{v'_W-b_i\alpha}{a_i-b_i\xi}-t_V\right|
 &\leq
 \left|\frac{v_W-b_i\alpha}{a_i-b_i\xi}-t_V\right|
 +\frac{|v'_W-v_W|}{|a_i-b_i\xi|}\\
 &<\frac{\rho}{4}+\frac{|v'_W-v_W|}{\kappa_i}
 <\frac{\rho}{2}.
 \end{aligned}
\end{equation}
Indeed, the first term is bounded by
\eqref{eq:universal-cancellation}, while $v'_W-v_W=\delta$ when
$i=i_*$ and $v'_W-v_W=0$ otherwise. In the former case,
$|\delta|/\kappa_{i_*}<\rho/4$. Thus every translated cancellation
value remains in the interior of the old assigned interval
$D_{\eta_V}=[t_V-\rho,t_V+\rho]$.

Put
\[
 d_0=\operatorname{dist}(S+\delta,\R\setminus G_j)>0,\qquad
 C=\sum_{i=1}^k\max_{\xi\in J_i}|a_i-b_i\xi|>0,
\]
and choose $0<\varepsilon<\min\{\rho/4,d_0/(2C)\}$.
Refine to a common binary level $N'>N$ so that, for every new
cell $T=U_{\sigma,\tau}\subseteq W\subseteq V\subseteq U_i$,
where $\sigma,\tau\in2^{N'}$ and $(a_i,b_i)\neq(0,0)$,
\[
 \sup_{\substack{
   (\xi,\alpha),    (\xi',\alpha')\in(1+D_\sigma)\times D_\tau }}
 \left|
 \frac{v'_W-b_i\alpha}{a_i-b_i\xi}
 -
 \frac{v'_W-b_i\alpha'}{a_i-b_i\xi'}
 \right|
 <\frac{\varepsilon}{2}.
\]This is a finite refinement: the denominators are bounded away
from zero by \eqref{eq:denominator}, and the finitely many
expressions are uniformly continuous on their closed rectangles.

For each new cell $T=U_{\sigma,\tau}\subseteq W\subseteq V\subseteq U_i$,
where $\sigma,\tau\in2^{N'}$, the lower left corner of its closed
numerical rectangle is $(1+\sigma,\tau)$. On a cell with nonzero
coefficient pair, choose a binary string $\eta'_T$ such that
\begin{equation}\label{eq:refined-assigned-interval}
 \eta_V\preceq\eta'_T,\qquad
 \sup_{u\in D_{\eta'_T}}
 \left|u-\frac{v'_W-b_i\tau}{a_i-b_i(1+\sigma)}\right|
 <\frac{\varepsilon}{2}.
\end{equation}
The displayed center is within $\rho/2$ of $t_V$ by
\eqref{eq:translated-cancellation-bound}, so it belongs to the
interior of $D_{\eta_V}$. Take a binary expansion of this center
extending $\eta_V$ and let $\eta'_T$ be its prefix of a common
length $M'>M$ with $2^{-M'}<\varepsilon/2$.
The center belongs to $D_{\eta'_T}$, whose diameter is
$2^{-M'}$, so \eqref{eq:refined-assigned-interval} follows.
On zero-pair cells, append zeros to $\eta_V$ to reach the same
length $M'$. In every case,
\[
 \eta'_T\in2^{M'},\qquad
 D_{\eta'_T}\subseteq D_{\eta_V}.
\]
For $(\xi,\alpha)\in T$ and $u\in D_{\eta'_T}$ on a nonzero cell,
the choice of $N'$ and \eqref{eq:refined-assigned-interval} give
\[
 \begin{aligned}
 \left|u-\frac{v'_W-b_i\alpha}{a_i-b_i\xi}\right|
 &\leq
 \left|u-\frac{v'_W-b_i\tau}{a_i-b_i(1+\sigma)}\right|+
 \left|\frac{v'_W-b_i\tau}{a_i-b_i(1+\sigma)}
       -\frac{v'_W-b_i\alpha}{a_i-b_i\xi}\right|\\
 &<\frac{\varepsilon}{2}+\frac{\varepsilon}{2}
   =\varepsilon.
 \end{aligned}
\]
Consequently,
\[
 |(a_i-b_i\xi)u+b_i\alpha-v'_W|
 <|a_i-b_i\xi|\varepsilon.
\]
The contribution of a zero pair is zero.
Each resulting sum is therefore within $C\varepsilon<d_0/2$ of
$S+\delta$, and hence lies in $G_j$.
This proves \eqref{eq:universal-stage} for the present choice.
Extension of the assigned strings preserves all earlier choices.

After all admissible coefficient lists (at most $|Q_j|^{2k}-1$)
have been processed, increase the parameter
and assigned-string lengths further if necessary to make them
strictly larger than $n_{j-1}$ and $m_{j-1}$. Call them $n_j,m_j$
and call the assigned strings $\theta^j_{\sigma,\tau}$.
This also describes what to do if the list is empty.
The induction is complete.

\medskip
\noindent\textit{The generators and their rational combinations.}
After completing all stages, let $(\xi,\beta)$ range over all of
$P$ and define the final generators by
\begin{equation}\label{eq:universal-generators}
 g_{\xi,\beta}=\bigcup_{\ell\geq0}
      \theta^\ell_{(\xi-1)\res n_\ell,\,\beta\res n_\ell},
 \qquad h_{\xi,\beta}=\beta-\xi g_{\xi,\beta}.
\end{equation}
The index $\ell$ ranges over the whole construction, while
$\beta$ is an arbitrary full branch, not a stage-dependent choice.
Each union defines a real in $[1/4,1/2]$, and
\[
 h_{\xi,\beta}+\xi g_{\xi,\beta}=\beta.
\]
If two parameters are in the same level-$n_j$ cell, their
$g$-values differ by at most $2^{-m_j}$. Since those cells are
clopen, the family $g_{\xi,\beta}$ is continuous on $P$.
The family $h_{\xi,\beta}$ is continuous as well. These observations
use only the nested strings; no functions were chosen during
the induction.

Fix distinct parameters $(\xi_1,\beta_1),\ldots,(\xi_r,\beta_r)\in P$ and
nonzero rational pairs $(a_i,b_i)$. Consider
\[
 z=\sum_{i=1}^r(a_i g_{\xi_i,\beta_i}+b_i h_{\xi_i,\beta_i}).
\]
For every sufficiently large $j$, all coefficients belong to
$Q_j$ and the parameters lie in distinct old cells.
Moreover $a_i-b_i\xi_i\neq0$, since $\xi_i$ is irrational.
There is therefore a neighborhood of $\xi_i$ on which this
expression is bounded away from zero. The closed dyadic interval
of the old cell containing $\xi_i$ has length $2^{-n_{j-1}}$ and
contains $\xi_i$, so it is eventually contained in that neighborhood.
Thus \eqref{eq:denominator} holds on each used cell.
Assign its pair to that cell and zero pairs elsewhere.
This is one of the choices treated at every sufficiently large
stage. Apply \eqref{eq:universal-stage} with the test branch
equal to the corresponding $\beta_i$ and the allowed value equal
to $g_{\xi_i,\beta_i}$ on each used cell. These limit values lie in
their assigned intervals, so $z\in G_j$ at every such stage.
It follows that $z\in G$ and, in particular, $z\neq0$.

Hence the indexed family
\[
 E=\{g_{\xi,\beta}:(\xi,\beta)\in P\}
       \cup\{h_{\xi,\beta}:(\xi,\beta)\in P\}
\]
is $\Q$-linearly independent. In this definition, $(\xi,\beta)$
ranges over all of $P$, so $E$ is the full final generator set.
Put $A=\operatorname{span}_{\Q}E$.
After grouping equal parameter labels, every nonzero element of
$A$ is a sum of the type just considered. Therefore
\[
 A\setminus\{0\}\subseteq G,\qquad \dimH A=0,
\]
and $A\neq\R$.

\medskip
\noindent\textit{Borelness.}
The space $P=\mathcal J\times2^\omega$ is Polish. By the continuity
and independence just proved, the assignment
\[
 (\xi,\beta,0)\longmapsto g_{\xi,\beta},\qquad
 (\xi,\beta,1)\longmapsto h_{\xi,\beta}
\]
is a continuous injection from $P\times\{0,1\}$ into $\R$.
The Lusin--Souslin theorem makes its image $E$ Borel; see
\cite[Exercise~2E.7]{Moschovakis}.
For $r\geq1$, the set
\[
 E^{(r)}=\{(e_1,\ldots,e_r)\in E^r:e_1<\cdots<e_r\}
\]
is Borel. For each fixed
$(q_1,\ldots,q_r)\in(\Q\setminus\{0\})^r$, the continuous map
\[
 (e_1,\ldots,e_r)\longmapsto\sum_{i=1}^rq_ie_i
\]
is injective on $E^{(r)}$: independence determines the support
and its coefficients, and increasing order determines the tuple.
Its image is Borel by Lusin--Souslin. Taking the countable union
over $r$ and the nonzero rational coefficient tuples, and adjoining
$0$, proves that $A$ is Borel. Remark~\ref{rem:borel-complexity}
gives a more precise description of its Borel complexity.

\medskip
\noindent\textit{The set $X_A$.}
For $\xi\in\mathcal J$ and $\beta\in2^\omega$,
\[
 \beta=h_{\xi,\beta}+\xi g_{\xi,\beta}\in A+\xi A.
\]
Thus $A+\xi A$ contains $[0,1]$ and is a subgroup, so it is $\R$.
For an arbitrary irrational $\xi$, choose $m\in\Z$ with
$\xi+m\in\mathcal J$ and use $A+(\xi+m)A=A+\xi A$.
If $q\in\Q$, then $A+qA=A\neq\R$, since $A$ is a proper
$\Q$-vector space. Hence $X_A=\R\setminus\Q$.
The remaining complexity assertions are proved in
Remarks~\ref{rem:borel-complexity} and
\ref{rem:effective-universal} below.
\end{proof}

\begin{remark}
In the first construction the parameter space is $2^\omega$, so
the generators form a compact set. In the second it is
$((1,2)\setminus\Q)\times2^\omega$, and Borelness follows instead
from rational independence and Lusin--Souslin. Only the generators
with irrational multiplier parameters are used to generate $A$;
we do not adjoin the generators obtained by extending to the
omitted rational multipliers. Such extensions will be used only
as auxiliary objects in Remark~\ref{rem:borel-complexity}.
Retaining branch labels at dyadic endpoints is harmless in both
constructions: different labels are simply treated as different
parameters.
\end{remark}

\begin{remark}[Borel complexity]\label{rem:borel-complexity}
\label{rem:fsigma-obstruction}
The group constructed in Theorem~\ref{thm:borel-universal} is a
difference of two $F_\sigma$ sets. In particular,
\[
 A\in\boldsymbol{\Delta}^{0}_{3}
 \setminus\bigl(\boldsymbol{\Sigma}^{0}_{2}
                 \cup\boldsymbol{\Pi}^{0}_{2}\bigr).
\]
Thus its Borel rank is exactly three: it is both $G_{\delta\sigma}$
and $F_{\sigma\delta}$, but neither $F_\sigma$ nor $G_\delta$.
The point is that the binary construction extends to a compact
parameter space, while the unwanted sums involving rational
multipliers can be removed by subtracting an $F_\sigma$ set.

For this calculation only, let $\widehat P=2^\omega\times2^\omega$.
For $p=(\gamma,\beta)\in\widehat P$, put
\[
 \xi_p=1+\gamma,\qquad
 g_p=\bigcup_{\ell\geq0}
          \theta^\ell_{\gamma\res n_\ell,\,\beta\res n_\ell},
 \qquad h_p=\beta-\xi_pg_p.
\]
The strings in the construction were assigned to every pair of
finite prefixes. Hence these formulas define continuous families
on all of the compact space $\widehat P$, even when $\xi_p$ is
rational. The original generators are precisely those with
$\xi_p\notin\Q$; the auxiliary generators do not enlarge $A$.

The stage estimates hold throughout the closed numerical
rectangles, including their rational multiplier coordinates.
Consequently, the same verification as above gives
\begin{equation}\label{eq:extended-nonvanishing}
 \sum_{i=1}^r(a_i g_{p_i}+b_i h_{p_i})\in G
 \quad\text{if}\quad
 p_i\neq p_k\ (i\neq k),\qquad
 a_i-b_i\xi_{p_i}\neq0\ (1\leq i\leq r),
\end{equation}
where $r\geq1$ and all coefficients are rational. Indeed, the
distinct binary parameter labels eventually lie in distinct old
cells, and each nonzero denominator is eventually bounded away
from zero on its old closed multiplier interval. Thus the sum
belongs to $G_j$ at every sufficiently late stage, and hence to
$G$. Irrationality was used in the original verification only to
ensure these nonvanishing conditions automatically.

Let $F$ be the set of all sums satisfying the conditions in
\eqref{eq:extended-nonvanishing}. Let $F_{\mathrm{rat}}$ consist
of those sums admitting such a representation with at least one
$\xi_{p_i}\in\Q$. Both sets are $F_\sigma$. To see this, fix
$r\geq1$ and nonzero rational coefficient pairs $(a_i,b_i)$, and put
\[
 \mathcal U=\left\{(p_1,\ldots,p_r)\in\widehat P^r:
 \begin{array}{ll}
 p_i\neq p_k & (i\neq k),\\
 a_i-b_i\xi_{p_i}\neq0 & (1\leq i\leq r)
 \end{array}\right\}.
\]
It is clear  that $\mathcal U$ is open.
Therefore $\mathcal U$ is the union of the countably many
basic cylinders contained in it; write
\[
 \mathcal U=\bigcup_{\ell\geq1}\mathcal V_\ell,
 \qquad\mathcal V_\ell\text{ compact and clopen}.
\]
Its image under the continuous sum map
\[
 (p_1,\ldots,p_r)\longmapsto
 \sum_{i=1}^r(a_i g_{p_i}+b_i h_{p_i})
\]
is consequently a countable union of compact subsets of $\R$,
and hence is $F_\sigma$.

For $F_{\mathrm{rat}}$, restrict to tuples in $\mathcal U$ with
at least one rational multiplier. This restricted domain is
\[
 \bigcup_{\substack{\ell\geq1,\ 1\leq i\leq r\\
                    t\in\Q\cap[1,2]}}
 \left(\mathcal V_\ell\cap
 \{(p_1,\ldots,p_r)\in\widehat P^r:\xi_{p_i}=t\}\right).
\]
Each set inside parentheses is compact: the equality
$\xi_{p_i}=t$ defines a closed set, since the coordinate map
$(p_1,\ldots,p_r)\mapsto\xi_{p_i}$ is continuous.
The restricted domain is therefore also a countable union
of compact sets, and its image under the same sum map is
$F_\sigma$. Finally, taking the countable union over $r$ and
all rational coefficient lists proves that both $F$ and
$F_{\mathrm{rat}}$ are $F_\sigma$. Notice also that
$F_{\mathrm{rat}}\subseteq F\subseteq G$, so neither set contains
$0$.

We claim that
\begin{equation}\label{eq:borel-difference}
 A=(F\cup\{0\})\setminus F_{\mathrm{rat}}.
\end{equation}
Every nonzero element of $A$ has a representation in $F$ using
only irrational multiplier parameters. It cannot also have an
admissible representation involving a rational multiplier.
Otherwise, subtract the two representations, group equal binary
parameter labels, and discard zero coefficient pairs. At every
remaining irrational label the denominator is nonzero. At every
rational label, the pair comes only from the second representation,
so its denominator remains nonzero as well. At least one such
rational label survives. Equation~\eqref{eq:extended-nonvanishing}
would therefore put the resulting zero sum in $G$, a contradiction.
This proves $A\cap F_{\mathrm{rat}}=\varnothing$. Conversely, a
representation of an element of $F\setminus F_{\mathrm{rat}}$ uses
only irrational multiplier parameters, so that element belongs
to $A$. This proves \eqref{eq:borel-difference}, and therefore
$A\in\boldsymbol{\Delta}^{0}_{3}$.

For the lower bound, Proposition~4.6 of Ye, Yu, and Zhao
\cite{YYZ}, applied to additive subgroups, implies that
$X_B$ is meager for every Lebesgue-null $F_\sigma$ additive subgroup
$B\leq\R$. Since $\dimH A=0$, the group $A$ is Lebesgue null.
If it were $F_\sigma$, then $X_A$ would be meager, contrary to
$X_A=\R\setminus\Q$ being comeager. It is not $G_\delta$ either:
as a nonzero $\Q$-vector space it is dense in $\R$, so if it were
$G_\delta$, it would be comeager. For every $t\in\R$ the sets
$A$ and $t+A$ would then meet, giving $t\in A-A=A$ and forcing
$A=\R$, a contradiction.
\end{remark}

\begin{remark}[Effective complexity of the universal group]
\label{rem:effective-universal}
The group in Theorem~\ref{thm:borel-universal} can be chosen
lightface $\Delta^0_3$, with no oracle. More precisely, the sets
$F$ and $F_{\mathrm{rat}}$ in
Remark~\ref{rem:borel-complexity} can both be chosen lightface
$\Sigma^0_2$, and
\[
 A=(F\cup\{0\})\setminus F_{\mathrm{rat}}\in\Delta^0_3.
\]
Here the effective pointclasses have the meaning used in
Remark~\ref{rem:effective-construction}; see also
\cite[Chapter~3]{Moschovakis}. We explain both the effectivity
of the induction and that of the compact-image description.

First, Lemma~\ref{lem:zero-dimensional-target} gives a lightface
$\Pi^0_2$ target $G$, with $G_j=\bigcap_{n\leq j}O_n$ uniformly
effectively open. All finite-stage coefficients, interval
endpoints, midpoints, and constants $v_W$ are rational. In
particular, the minimum $\kappa_i$ in \eqref{eq:denominator} is
a computable rational: one checks whether the rational affine
function $a_i-b_i\xi$ vanishes on the rational interval $J_i$,
and otherwise takes the smaller absolute endpoint value.
Thus admissibility of each coefficient list is decidable,
and the first refinement level $N$ is found by rational
comparisons in the displayed estimate preceding
\eqref{eq:universal-cancellation}.

The finite set $S$ consists of rationals. Dovetail a search for
rationals $\delta,d_*$ satisfying
\[
 |\delta|<\frac{\kappa_{i_*}\rho}{4},\qquad d_*>0,\qquad
 [s+\delta-d_*,s+\delta+d_*]\subseteq G_j
 \quad(s\in S).
\]
The last condition is computably enumerable: enumerate the rational
intervals of $G_j$ until finitely many cover each of the finitely
many closed rational intervals. Density and openness give such
a pair, so the search terminates. The resulting $d_*$ is less
than the distance $d_0$ used in the proof; we do not need to
compute $d_0$ itself. Choose a rational
$0<\varepsilon<\min\{\rho/4,d_*/(2C)\}$.
The second refinement is effective as well. For example, choose
$N'>N$ so large that, for every relevant $W\subseteq U_i$,
\[
 2^{-N'}\left(
 \frac{|b_i|}{\kappa_i}
 +\frac{|b_i|(|v'_W|+|b_i|)}{\kappa_i^2}
 \right)<\frac{\varepsilon}{2}.
\]
The expression in parentheses is a rational Lipschitz bound
for $(v'_W-b_i\alpha)/(a_i-b_i\xi)$ with respect to the maximum
of the two coordinate differences. This follows by subtracting
two fractions and using $0\leq\alpha\leq1$ and the lower bound
$\kappa_i$ for the denominator. Thus the displayed
inequality implies the required oscillation bound on every
level-$N'$ rectangle. Finally, the centers in
\eqref{eq:refined-assigned-interval} are rational, so suitable
finite extensions $\eta'_T$ can be found by rational endpoint
comparisons and then extended to a common length.
Consequently $n_j,m_j$ and all assigned strings are computable,
uniformly in $j$.

It follows that the extended maps $p\mapsto g_p,h_p$ on
$\widehat P=2^\omega\times2^\omega$ from
Remark~\ref{rem:borel-complexity} are computable. Indeed, the
assigned string at stage $j$ approximates $g_p$ within
$2^{-m_j}$ using only the two length-$n_j$ parameter prefixes;
$h_p=\beta-\xi_pg_p$ is then computable by arithmetic.
No irrational parameter is fixed as an oracle: the parameter
branches are the inputs to these maps.

To verify the effective complexity of their images, fix $r\geq1$
and rational coefficient pairs $(a_i,b_i)$, and put
\[
 \Phi(p_1,\ldots,p_r)
   =\sum_{i=1}^r(a_i g_{p_i}+b_i h_{p_i}).
\]
For $p=(\gamma,\beta)$ write
$p\res m=(\gamma\res m,\beta\res m)$. For $m\geq1$, let
\[
 \mathcal K_m=\left\{(p_1,\ldots,p_r)\in\widehat P^r:
 \begin{array}{ll}
 p_i\res m\neq p_k\res m & (i\neq k),\\
 |a_i-b_i\xi_{p_i}|\geq2^{-m} & (1\leq i\leq r)
 \end{array}\right\}.
\]
The dependence of $\mathcal K_m$ and $\Phi$ on the fixed coefficient
list is suppressed in this notation. The parameter domains $\mathcal K_m$ are
uniformly $\Pi^0_1$ subsets of the effectively compact space
$\widehat P^r$, and their union is exactly the admissible domain
in \eqref{eq:extended-nonvanishing}. They are effectively
compact in the sense that their finite basic open covers can
be enumerated, uniformly in all the indices: a cover of $\mathcal K_m$,
together with its enumerated complement, covers
$\widehat P^r$, and this can be witnessed by finitely many
basic cylinders.

The same applies to each closed slice
\[
 \mathcal K_m\cap\{(p_1,\ldots,p_r):\xi_{p_i}=t\},
 \qquad 1\leq i\leq r,\quad t\in\Q\cap[1,2].
\]
Their images under the computable map $\Phi$, and the images
$\Phi(\mathcal K_m)$, are uniformly effectively compact, hence uniformly
$\Pi^0_1$ subsets of $\R$. For completeness, to semidecide that
such an image lies in an effectively open set $U$, semidecide
that its compact domain lies in the effectively open preimage
$\Phi^{-1}(U)$. Taking $U=\R\setminus J$ for closed rational
intervals $J$ enumerates the complement of the image by the
interiors of all such disjoint intervals. This justifies the
$\Pi^0_1$ claim without any effective inversion theorem.

Taking computable unions of these images over $m,r$ and all
rational coefficient lists gives $F\in\Sigma^0_2$; additionally
enumerating $i$ and $t\in\Q\cap[1,2]$ gives
$F_{\mathrm{rat}}\in\Sigma^0_2$. Equation~\eqref{eq:borel-difference}
therefore yields the asserted lightface $\Delta^0_3$ definition
of $A$. Neither this step nor the induction uses a Turing jump.
The lower bounds in Remark~\ref{rem:borel-complexity} remain
valid: this $A$ is neither $F_\sigma$ nor $G_\delta$.
\end{remark}

\section{Applications: Cartesian products and projections}
\label{sec:applications}

We now apply the preceding constructions to Cartesian products
and their projections. The basic observation is that the linear map
\[
 P_\xi:\R^2\longrightarrow\R,\qquad P_\xi(a,b)=a+\xi b,
\]
is Lipschitz and satisfies $P_\xi(A\times A)=A+\xi A$.
Thus a surjective dilate sum forces the Cartesian square to have
dimension at least one, even when $A$ itself has dimension zero.

\begin{corollary}\label{cor:positive-product}
For every $0<d<1/2$ and every irrational $\xi$, there is an
$F_\sigma$ subgroup $A$ of $(\R,+)$ such that
\[
 \dimH A=d,\qquad A+\xi A=\R,\qquad
 \dimH(A\times A)\geq1>2\dimH A.
\]
\end{corollary}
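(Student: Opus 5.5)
The plan is to obtain $A$ directly from Theorem~\ref{thm:main} (or from Lemma~\ref{lem:initial-groups} together with Theorem~\ref{thm:extension}). Then $P_\xi$ gives the lower bound on the dimension of the square.

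First I will fix $0<d<1/2$ and an irrational $\xi$. Theorem~\ref{thm:main} provides an $F_\sigma$ $\Q$-vector subspace $A$, in particular an additive subgroup, with $\dimH A=d$ and $A+\xi A=\R$. Equivalently, I can take the Erd\H{o}s--Volkmann group $B$ of dimension $d$ and apply Theorem~\ref{thm:extension} to it. This settles the first two assertions.

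Next I will prove the product bound. The map $P_\xi(u,v)=u+\xi v$ is Lipschitz on $\R^2$ with constant $\sqrt{1+\xi^2}$, and Lipschitz maps do not increase Hausdorff dimension. Since $P_\xi(A\times A)=A+\xi A=\R$, this gives
\[
 1=\dimH\R=\dimH P_\xi(A\times A)\leq\dimH(A\times A).
\]
Finally, $d<1/2$ gives $2\dimH A=2d<1$. That yields the strict inequality $1>2\dimH A$.

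I expect no real obstacle. The only points to check are that the Lipschitz invariance is applied to the image of the whole square, not merely to a Borel subset of it, and that the dilate-sum identity is exactly $P_\xi(A\times A)=A+\xi A$. Neither needs any measurability assumption, since the Lipschitz bound $\HH^s(f(X))\leq \mathrm{Lip}(f)^s\HH^s(X)$ holds for arbitrary sets.
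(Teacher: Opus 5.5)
Your proposal is correct and is the paper's own proof: take $A$ from Theorem~\ref{thm:main} and observe that the Lipschitz map $P_\xi$ sends $A\times A$ onto $\R$, so $\dimH(A\times A)\geq 1>2d$. Your two added remarks are accurate and only make explicit what the paper leaves implicit: the Lipschitz constant is $\sqrt{1+\xi^2}$, and the Hausdorff-measure bound holds for arbitrary sets.
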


\begin{proof}
Take $A$ from Theorem~\ref{thm:main}. The Lipschitz map
$(a,b)\mapsto a+\xi b$ maps $A\times A$ onto $\R$.
Thus $\dimH(A\times A)\geq1>2d$.
\end{proof}

\begin{corollary}\label{cor:zero-product}
For every irrational $\xi$, there is an $F_\sigma$ subgroup $A$
satisfying
\[
 \dimH A=0,\qquad \dimH(A\times A)=1,\qquad A+\xi A=\R.
\]
\end{corollary}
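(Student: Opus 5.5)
Take $A$ from Theorem~\ref{thm:main} with $d=0$ (or apply Theorem~\ref{thm:extension} to $B=\{0\}$), so that $A$ is an $F_\sigma$ subgroup, in fact a $\Q$-vector space, with $\dimH A=0$ and $A+\xi A=\R$. Two of the three required properties then hold by construction. It remains to show $\dimH(A\times A)=1$, and we split this into the two inequalities.

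\emph{Lower bound.} The map $P_\xi(a,b)=a+\xi b$ is Lipschitz from $\R^2$ to $\R$, and $P_\xi(A\times A)=A+\xi A=\R$. Lipschitz maps do not increase Hausdorff dimension, so
\[
 1=\dimH\R\leq\dimH(A\times A).
\]
This is the same argument as in Corollary~\ref{cor:positive-product}.

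\emph{Upper bound.} Here we use the standard product inequality
\[
 \dimH(E\times F)\leq\dimH E+\overline{\dim}_{\mathrm B}F,
\]
or, for $F_\sigma$ sets, the version with packing dimension of the second factor. The upper box dimension of a set in $\R$ is at most $1$. Write $A=\bigcup_j K_j$ with each $K_j$ compact. Then $A\times A=\bigcup_{j,k}K_j\times K_k$, and for each pair
\[
 \dimH(K_j\times K_k)\leq\dimH K_j+\overline{\dim}_{\mathrm B}K_k\leq0+1=1.
\]
Countable stability of Hausdorff dimension then gives $\dimH(A\times A)\leq1$.

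A direct covering argument also works and avoids citing the product inequality. Fix $s>0$ and $\varepsilon>0$. Cover $K_j$ by intervals $I_i$ of length at most $\varepsilon$ with $\sum|I_i|^{s}<\varepsilon$. Cover $K_k\subseteq[-R,R]$ by about $2R/|I_i|$ intervals of length $|I_i|$, giving squares of side $|I_i|$ that cover $(I_i\cap K_j)\times K_k$. The resulting $(1+s)$-content is at most
\[
 \sum_i\frac{2R}{|I_i|}\,\bigl(\sqrt2\,|I_i|\bigr)^{1+s}\lesssim R\sum_i|I_i|^{s}\lesssim R\varepsilon.
\]
Hence $\HH^{1+s}(K_j\times K_k)=0$ for every $s>0$, so $\dimH(K_j\times K_k)\leq1$.

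The only step that needs any care is this upper bound, and the only subtlety there is to pass through the compact pieces $K_j$ rather than bounding the box dimension of $A$ itself.
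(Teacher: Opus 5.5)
Your proof is correct and follows the paper's argument. You take $A$ from Theorem~\ref{thm:main} with $d=0$, get the lower bound from the Lipschitz projection $P_\xi$, and prove the upper bound by essentially the same covering estimate; the paper phrases it as $A\times A\subseteq A\times\R$ with $\dimH(E\times\R)\leq\dimH E+1$, while you cover the compact pieces $K_j\times K_k$, which amounts to the same computation.
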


\begin{proof}
Use Theorem~\ref{thm:main} with $d=0$. The preceding projection
argument gives the lower bound. For the upper bound, use
$A\times A\subseteq A\times\R$ and
\[
 \dimH(E\times\R)\leq\dimH E+1\qquad(E\subseteq\R).
\]
Indeed, cover a bounded part of $E$ by intervals of lengths $r_i<1$
with $\sum_i r_i^u$ arbitrarily small, where $u>\dimH E$.
Over a bounded interval in the second coordinate, each resulting
rectangle is covered by $O(r_i^{-1})$ squares of diameter $O(r_i)$.
The total $(u+1)$-content is at most a constant times
$\sum_i r_i^u$. Take a countable union of bounded parts and let
$u\downarrow\dimH E$.
\end{proof}

\begin{remark}
Starting with $B=\{0\}$ in Theorem~\ref{thm:extension} gives a group
generated by the compact set
\[
 K=\{0\}\cup\{g_\beta:\beta\in2^\omega\}
             \cup\{h_\beta:\beta\in2^\omega\},
\]
with
\[
 \dimH\bigl(m(K-K)\bigr)=0\quad(m\geq1),\qquad
 \dimH(K\times K)=1.
\]
The first assertion follows from $m(K-K)\subseteq A$.
For the second, $K+\xi K$ contains $[0,1]$, while
$K\times K\subseteq A\times A$.
\end{remark}

\begin{remark}
The group in Theorem~\ref{thm:borel-universal} also satisfies
$\dimH(A\times A)=1$. The lower bound follows by projection
using any irrational multiplier, and the upper bound follows
from $A\times A\subseteq A\times\R$, as in
Corollary~\ref{cor:zero-product}.
\end{remark}

\medskip
\noindent\textit{Connection with Marstrand's projection theorem.}
Marstrand's theorem \cite{Marstrand} states that, for a Borel set
$F\subseteq\R^2$, its orthogonal projection onto almost every
line has dimension $\min\{1,\dimH F\}$. The scalar orthogonal
projection in the unit direction $(1,\xi)/\sqrt{1+\xi^2}$ is
$P_\xi/\sqrt{1+\xi^2}$. Multiplication by this nonzero constant
does not change dimension, and the change of parameter
$\theta=\arctan\xi$ preserves null sets locally in both directions.
Consequently, for every Borel $A\subseteq\R$,
\begin{equation}\label{eq:marstrand-dilate}
 \dimH(A+\xi A)=\min\{1,\dimH(A\times A)\}
 \quad\text{for Lebesgue-almost every }\xi\in\R.
\end{equation}
Notice that the right-hand side involves $\dimH(A\times A)$,
not $2\dimH A$. The preceding examples show that these quantities
can differ even for $F_\sigma$ additive groups.

\begin{corollary}\label{cor:almost-every-dilate}
For every $d\in[0,1)$ and every prescribed irrational $\xi_0$,
there is an $F_\sigma$ subgroup $A\subseteq\R$ such that
\[
 \dimH A=d,\qquad A+\xi_0 A=\R,\qquad
 \dimH(A+\xi A)=1
 \quad\text{for Lebesgue-almost every }\xi\in\R.
\]
\end{corollary}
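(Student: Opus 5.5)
The plan is to take $A$ from Theorem~\ref{thm:main} with the given $d$ and the multiplier $\xi_0$. It is $F_\sigma$, in fact a $\Q$-vector space, and it satisfies $\dimH A=d$ and $A+\xi_0A=\R$. The first two assertions then hold by construction. The only thing left is the almost-every statement, which I would obtain from \eqref{eq:marstrand-dilate} together with the projection argument of Corollary~\ref{cor:positive-product}.

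First, $A$ is Borel, since it is $F_\sigma$, so \eqref{eq:marstrand-dilate} applies. This gives
\[
 \dimH(A+\xi A)=\min\{1,\dimH(A\times A)\}
\]
for Lebesgue-almost every $\xi\in\R$.

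Second, I show $\dimH(A\times A)\geq1$. The map $P_{\xi_0}(u,v)=u+\xi_0v$ is Lipschitz and sends $A\times A$ onto $A+\xi_0A=\R$. Lipschitz maps do not increase Hausdorff dimension, so
\[
 \dimH(A\times A)\geq\dimH\R=1 .
\]
Hence the minimum above equals $1$, and $\dimH(A+\xi A)=1$ for almost every $\xi$. An upper bound on $\dimH(A\times A)$ is not needed here. For the record, the bound $\dimH(A\times A)\leq d+1$ from Corollary~\ref{cor:zero-product} holds, but it plays no role, because only $\min\{1,\cdot\}$ enters.

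The one point needing care is the derivation of \eqref{eq:marstrand-dilate} from Marstrand's theorem, which the paper has already sketched. There are three pieces. The dilate sum $A+\xi A$ is a nonzero scalar multiple of an orthogonal projection of $A\times A$. The change of parameter $\xi\mapsto\arctan\xi$ preserves Lebesgue-null sets locally in both directions. Finally, the product $A\times A$ of two Borel sets is Borel in $\R^2$, so the hypothesis of Marstrand's theorem is met. Once these are accepted, the corollary is immediate and I expect no further obstacle.
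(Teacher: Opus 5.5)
Your proposal is correct and follows the paper's proof: take $A$ from Theorem~\ref{thm:main} with multiplier $\xi_0$, use the Lipschitz surjection $P_{\xi_0}$ from $A\times A$ onto $\R$ to get $\dimH(A\times A)\geq1$, and then apply \eqref{eq:marstrand-dilate}. Your side remark about the upper bound $d+1$ is accurate, since it follows from $A\times A\subseteq A\times\R$ as in the proof of Corollary~\ref{cor:zero-product}, but as you say it is not needed.
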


\begin{proof}
Choose $A$ by Theorem~\ref{thm:main} with multiplier $\xi_0$.
Surjectivity of $P_{\xi_0}$ on $A\times A$ gives
$\dimH(A\times A)\geq1$, so \eqref{eq:marstrand-dilate} applies.
\end{proof}

In particular, dimension zero of $A$ does not prevent almost all
dilate sums from having dimension one. When $\dimH(A\times A)=1$,
Marstrand's theorem by itself gives full dimension, not equality
with $\R$. Our constructions impose the stronger surjectivity
conclusion at a prescribed irrational multiplier in
Theorem~\ref{thm:main}, and at every irrational multiplier in
Theorem~\ref{thm:borel-universal}.

For the $\Q$-vector space $A$ in the latter theorem, the complete
projection-dimension profile is
\[
 \dimH P_\xi(A\times A)
 =\dimH(A+\xi A)
 =\begin{cases}
    0,&\xi\in\Q,\\
    1,&\xi\in\R\setminus\Q.
   \end{cases}
\]
Indeed, $A+\xi A=A$ for rational $\xi$, and $A+\xi A=\R$
otherwise. Thus among the finite-slope parameters $\xi$, the
exceptional set in Marstrand's dimension conclusion is exactly
$\Q$. The vertical projection is $A$ and also has dimension zero.

\section*{Acknowledgements}

The author acknowledges substantial assistance from OpenAI's
AI assistant, Codex, in the development and preparation of this
paper. This assistance included proposing and refining proof
strategies, working through technical details, and drafting and
revising the mathematical exposition and \LaTeX{} source.
These contributions went beyond language editing and arose
through sustained interaction with the author, whose questions,
corrections, and suggestions guided the work. The author takes
full responsibility for the mathematical claims, the verification
of the proofs and references, and the final text.

\end{document}